\documentclass[11pt]{article}
\usepackage{amsmath,amsthm, amssymb, amsfonts,accents,nccmath}
\usepackage{enumitem}
\usepackage{array}
\usepackage[toc,page]{appendix}
\usepackage[english]{babel}
\usepackage{dsfont}
\usepackage{booktabs}
\usepackage[linesnumbered,commentsnumbered,ruled,vlined]{algorithm2e}
\usepackage{tikz}
\usepackage{tikz-network}
\usepackage{caption}
\usepackage{subcaption}
\usepackage[utf8]{inputenc}
\usepackage[english]{babel}

\usepackage{caption}
\usepackage{subcaption}

\usepackage{cite}
%\usepackage{natbib}
%\usepackage{accents}
%\newlength{\dhatheight}
%\newcommand{\doublehat}[1]{%
%    \settoheight{\dhatheight}{\ensuremath{\hat{#1}}}%
%    \addtolength{\dhatheight}{-0.35ex}%
%    \hat{\vphantom{\rule{1pt}{\dhatheight}}%
%    \smash{\hat{#1}}}}
 
\usepackage{hyperref}
\hypersetup{
    colorlinks=true,
    linkcolor=blue,
    filecolor=darkmagenta,      
    urlcolor=cyan,
    citecolor=red
} 
\urlstyle{same}

%\makeatletter
%\def\@tempa#1{\@xp\@tempb\meaning#1\@nil#1}
%\def\@tempb#1>#2#3 #4\@nil#5{%
%  \@xp\ifx\csname#3\endcsname\mathaccent
%    \@tempc#4?"7777\@nil#5%
%  \else
%    \PackageWarningNoLine{amsmath}{%
%      Unable to redefine math accent \string#5}%
%  \fi
%}
%\def\@tempc#1"#2#3#4#5#6\@nil#7{%
%  \chardef\@tempd="#3\relax\set@mathaccent\@tempd{#7}{#2}{#4#5}}
%
%\@tempa\widehat
%\makeatother

\makeatletter
\def\BState{\State\hskip-\ALG@thistlm}
\makeatother
\numberwithin{equation}{section}

\fboxrule0.0001pt \fboxsep0pt

\newtheorem{theorem}{Theorem}[section]
\newtheorem{cor}[theorem]{Corollary}
\newtheorem{lem}[theorem]{Lemma}
\newtheorem{prop}[theorem]{Proposition}

\newtheorem{rem}[theorem]{Remark}

\newcommand{\ds}{\displaystyle}

\newcommand{\cof}{\textup{cof }}

\usepackage{mathtools}
\DeclarePairedDelimiter\ceil{\lceil}{\rceil}
\DeclarePairedDelimiter\floor{\lfloor}{\rfloor}

\textheight=24.2cm \textwidth=17.2cm \oddsidemargin=-0.1in
\evensidemargin=-0.4in \topmargin=-0.8in

\title{On the Squared Distance Matrix of a Starlike Block Graph}
\author{Joyentanuj Das\footnote{Department of Mathematics, College of Engineering and Technology, SRM Institute of Science and Technology, Kattankulathur, Chennai, 603203, India. \indent  Email: joyentanuj@gmail.com,  joyentad@srmist.edu.in}  \quad and \quad Sumit Mohanty\footnote{Indian Institute of Management Ranchi,  Prabandhan Nagar, Vill-Mudma, Nayasarai Road, Ranchi, Jharkhand-835303, India. \indent   Email:  sumitmath@gmail.com, sumit.mohanty@iimranchi.ac.in}}
\date{}

\begin{document}

\maketitle

\begin{abstract}
Let $D(G)$ be the distance matrix of a simple connected graph $G$. The  Hadamard product  $D(G)~\circ~ D(G)$ is called the squared distance matrix of $G$, and is denoted by $\Delta(G)$. A simple connected graph is called a starlike block graph if it has a central cut vertex,  and each of its blocks is a complete graph. Let $ \mathcal{S}(n_1, n_2, \ldots, n_b)$ be the starlike block graph with blocks $K_{n_1+1}, K_{n_2+1}, \ldots, K_{n_b+1} $ on $n=1 + \sum_{i=1}^b n_i$ vertices. In this article, we compute the determinant of $\Delta( \mathcal{S}(n_1, n_2, \ldots, n_b))$ and find its inverse as a rank-one perturbation of a positive semidefinite Laplacian-like matrix $\mathcal{L}$ with rank $n-1$. We also investigate the inertia of $\Delta( \mathcal{S}(n_1, n_2, \ldots, n_b))$. Furthermore, for a fixed value of $ n $ and $ b $, we determine the extremal graphs that uniquely attain the maximum and minimum spectral radius of the squared distance matrix for starlike block graphs on $ n $ vertices and $ b $ blocks.
\end{abstract}

\noindent {\sc\textsl{Keywords}:} Squared distance matrix, Laplacian-like matrix, Determinant, Inverse, Inertia, Spectral radius.

\noindent {\sc\textbf{MSC}:}  05C12, 05C50
\section{Introduction  and Motivation}
Let $G$ be a simple connected graph with vertex set $\{v_1,v_2,\ldots,v_n\}$. The distance matrix $D(G)=[d_{ij}]$ of  graph $G$ is a real symmetric matrix of order $n\times n$, where $d_{ij}=d(v_i,v_j)$ be the length of the shortest path between vertices $v_i$ and $v_j$ and $d_{ii}=0$ for $1\leq i\leq n$.  The  Hadamard product  $D(G)\circ D(G)$ is called the squared distance matrix of $G$ and is denoted by $\Delta (G)$. That is, $\Delta (G)=[d_{ij}^2]$. Therefore, $\Delta (G)$ is a real symmetric matrix and all its eigenvalues are real. We write $\Delta$ instead of $\Delta (G)$ if there is no scope for confusion. 

In the study of distance matrices, results in~\cite{Gr1,Gr2} generated significant interest among
scholars. In~\cite{Gr1}, it was shown that the determinant of the distance matrix, $\det D(T)=(-1)^{n-1}(n-1)2^{n-2},$  holds for any tree $T$ with $n$ vertices, and this result is independent of the geometry of the tree. Subsequently, in~\cite{Gr2}, it was found that the inverse of $D(T)$ is a rank-one perturbation of the Laplacian matrix of the tree $T$. Given that the distance matrix and the Laplacian matrix appear to be unrelated, these findings have sparked considerable interest, leading to several extensions and generalisations (for example, see~\cite{Bp3,JD1, Hou2,Zhou1}).

Given a graph \( G \), the aim of these studies is to define a matrix \( \mathcal{L} \) such that both the row and column sums of \( \mathcal{L} \) are zero. Additionally, it is required that the inverse of the squared distance matrix be a rank-one perturbation of \( \mathcal{L} \). Since the row and column sums of the Laplacian matrix are also zero, the author in \cite{Bp3} referred to this matrix \( \mathcal{L} \) as a ``Laplacian-like" matrix. Similar extensions have been explored for certain classes of strongly connected directed graphs (for example see~\cite{Bp1,JD2,Hou1, Zhou2}). Another key focus in the study of distance matrices is to investigate its spectrum and related properties (for example, see~\cite{Ob,Ob1, Sun, So}).

The squared distance matrix was first studied for trees in \cite{Bp4}. To be specific, given a tree \( T \), a formula for \( \det \Delta(T) \) was derived in \cite{Bp4}. Subsequently, in \cite{Bp5}, the inverse of \(  \Delta(T) \) was determined (when it exists) as a rank-one perturbation of a matrix. In~\cite{Bp6}, the author examined the determinant and the inverse of the squared distance matrix of a weighted tree and later in~\cite{IM}, a similar study was done for a tree with matrix weights. Additionally, the determinant and inverse of the squared distance matrix for a complete multipartite graph were explored in \cite{JD3,JD4}.

Similar to the distance matrix, the spectral properties of the squared distance matrix are also explored. Since all the eigenvalues of a squared distance matrix are real, an interesting problem for researchers is to determine its inertia. Inertia of a real symmetric matrix is defined as a triplet that consists of the number of positive eigenvalues, the number of negative eigenvalues, and the multiplicity of zero as an eigenvalue. The inertia of the squared distance matrix for a tree was computed in \cite{Bp5}, and later, an alternative proof was provided in \cite{How}. In that same study, the authors also computed the inertia for a unicyclic graph. In another work~\cite{JD3}, the authors considered the complete multipartite graph \(K_{n_1,n_2,\ldots,n_t}\). They first calculated the inertia and energy of \(\Delta(K_{n_1,n_2,\ldots,n_t})\). Then, for fixed values of \(n\) and \(t\), they discussed the existence and uniqueness of graphs for which the spectral radius and energy of \(\Delta(K_{n_1,n_2,\ldots,n_t})\) reached their maximum and minimum values.

In this literature survey, we observed that while some results regarding distance matrices and square distance matrices may appear similar, the proofs of these results are quite distinct. In this article, we focus on studying the square distance matrix of a starlike block graph.  We begin with a formal definition of a starlike block graph: A vertex $v$ of a connected graph $G$ is a cut-vertex of $G$ if $G-v$ is disconnected. A block of the graph $G$ is a maximal connected subgraph of $G$ that has no cut-vertex. In literature, a simple connected graph is referred to as a block graph if each of its blocks is a complete graph. A block graph \( G \) is said to be a starlike block graph if it has a central cut-vertex. We will denote the starlike block graph with blocks \( K_{n_1+1}, K_{n_2+1}, \ldots, K_{n_b+1} \) as \( \mathcal{S}(n_1, n_2, \ldots, n_b) \), where the graph has a total of \( n=1 + \sum_{i=1}^b n_i \) vertices. 

Due to the result~\cite[Theorem~$9.5$]{Bapat}, the determinant and cofactor can be determined using their blocks in the context of the distance matrix of a graph. In a later study~\cite{Bp3}, the authors analysed block graphs and introduced a method to calculate the inverse of the distance matrix by first computing the inverse of its blocks, and then extending this to the entire block graph. Since then, many researchers have applied a similar methodology to calculate the inverse of the distance matrix for various class graphs. This approach applies to distance matrices due to a specific property of distances in a graph: If \( v_i \) and \( v_j \) are two distinct vertices that are not in the same block, and the shortest path between them passes through a cut vertex \( v \), then the distance between \( v_i \) and \( v_j \) can be expressed as \( d(v_i, v_j) = d(v_i, v) + d(v, v_j) \). It is easy to see that this property fails in the case of squared distance matrices. Based on the examples encountered during the preparation of this manuscript, in the context of a squared distance matrix, we need to look at the graph as a whole and the blocks in it. This article is an attempt in this direction.

In this article, we will calculate the characteristic polynomial, determinant, cofactor, and inertia using results related to equitable partitions of the matrix \(\Delta(\mathcal{S}(n_1, n_2, \ldots, n_b))\), along with one of its principal submatrices and a few other results from matrix theory. Next, we find a few recurrence-type relations that help us define the Laplacian-like matrix \(\mathcal{L}\) and compute the inverse of \(\Delta(\mathcal{S}(n_1, n_2, \ldots, n_b))\) in the desired form. Finally, we present an interesting observation regarding the characteristic polynomial of \(\Delta(\mathcal{S}(n_1, n_2, \ldots, n_b))\). Then, using an application of the Intermediate Value Theorem, we are able to identify the extremal graphs for which the spectral radius of \(\Delta(\mathcal{S}(n_1, n_2, \ldots, n_b))\) uniquely attains its maximum and minimum values, given \(n\) vertices and \(b\) blocks.

This article is organised as follows: Section~\ref{sec:matrix} presents several results from matrix theory and outlines the notations used throughout the article. In Section~\ref{sec:det-cof-iner}, we calculate the determinant and cofactor of \(\Delta(\mathcal{S}(n_1,n_2,\ldots,n_b))\). We also investigate its inertia. Section~\ref{sec:Inverse_Delta} introduces the Laplacian-like matrix $\mathcal{L}$ and demonstrates that the inverse of $\Delta(\mathcal{S}(n_1,n_2,\cdots,n_b))$ takes the desired form. Additionally, we prove that $\mathcal{L}$ is positive semidefinite with a rank of $n-1$. Finally, in Section~\ref{sec:spectral-radius}, we determine the extremal graphs for fixed values of \(n\) and \(b\), where the spectral radius of $\Delta(\mathcal{S}(n_1,n_2,\cdots,n_b))$ uniquely achieves its maximum and minimum values among all starlike graphs with \(n\) vertices and \(b\) blocks.

\subsection{Notations and a few results from Matrix Theory}\label{sec:matrix}

In this section, we first introduce a few notations and then recall a few results from matrix theory, which will be used time and again in this article.   Let $I_n$ and $ \mathds{1}_n$ denote the identity matrix and the column vector of all ones,  respectively.  We use $\mathbf{0}_{m \times n}$ to represent zero matrix of order $m \times n$. We write   $I$, $ \mathds{1}$  and $\mathbf{0}$  if there is no scope of confusion with respect to the order. Further, we write $J_{m \times n}$ to denote the $m\times n$ matrix of all ones, and if $m=n$, we use the notation $J_m$. Similarly, we will use $J$ if there is no confusion with its order.  We write $R_i \leftarrow R_i + k R_j$ to  represent the elementary row operation in which the row $R_i$ is replaced by the sum of $R_i$ and   $k$ times  $R_j$ for $1\leq i,j \leq n$. Similarly, we write $C_i \leftarrow C_i + k C_j$ for an analogous  elementary column operation.

Given a matrix $A$, we write $A^t$ to denote the transpose of a matrix $A$ and use the notation $A(i \mid j)$ to denote the submatrix obtained by deleting the $i^{th}$ row and the $j^{th}$ column. Let $A$ be an $n \times n$ matrix. For $1 \leq i,j \leq n$, the cofactor $c_{ij}$ is defined as $(-1)^{i+j} \det A(i \mid j)$. The transpose of the cofactor matrix $[c_{ij}]$  of  $A$ is called the adjoint of $A$, denoted by $\textup{ Adj }A$. We use the notation  $\cof A$ to denote the sum of all cofactors of $A$. Hence $\cof A=\mathds{1}_n^t \textup{ Adj }A \ \mathds{1}_n$. We now state a result that determines the $\cof A$  first using elementary row operations $R_i \leftarrow R_i - R_1$  for all $2\leq i \leq n$  and then the column operations $C_i \leftarrow C_i -  C_1$ for all $2\leq i \leq n$.

\begin{lem}\label{Lem:cof}\cite[Lemma~$8.4$]{Bapat}
Let $A$ be an $n \times n$ matrix. Let $M_A$ be the matrix obtained from $A$ by subtracting the first row from all other rows and then subtracting the first column from all other columns. Then $$\cof A = \det M_A(1 | 1).$$
\end{lem}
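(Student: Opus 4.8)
The plan is to deduce the identity from the well-known expansion $\det(A + xJ) = \det A + x\,\cof A$, where $J = \mathds{1}_n \mathds{1}_n^t$ and $x$ is an indeterminate, by tracking how the two sides transform under the prescribed elementary operations.

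First I would establish $\det(A + xJ) = \det A + x\,\cof A$. Writing $A_i$ for the $i$-th row of $A$, the $i$-th row of $A + x\mathds{1}_n\mathds{1}_n^t$ is $A_i + x\mathds{1}_n^t$, so by multilinearity of the determinant in its rows, $\det(A+xJ)$ expands as a sum over subsets $S \subseteq \{1,\dots,n\}$ of $x^{|S|}$ times the determinant of the matrix whose $i$-th row is $\mathds{1}_n^t$ for $i \in S$ and $A_i$ otherwise. Any term with $|S|\ge 2$ has two identical rows and hence vanishes, leaving $\det A$ (from $S=\varnothing$) plus $x$ times $\sum_{i=1}^n$ of (the determinant of $A$ with row $i$ replaced by $\mathds{1}_n^t$); expanding each of these along row $i$ gives $\sum_{i,j} (-1)^{i+j}\det A(i\mid j) = \cof A$.

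Next I would apply to the matrix $A + xJ$ the row operations $R_i \leftarrow R_i - R_1$ for $i = 2,\dots,n$ and then the column operations $C_j \leftarrow C_j - C_1$ for $j = 2,\dots,n$; these do not change the determinant. The point is that the row operations cancel the term $x\mathds{1}_n^t$ in every row except the first, and the subsequent column operations then cancel the leftover $x$ in every entry of the first row except the $(1,1)$ entry; what remains in the full $n\times n$ array is precisely $M_A$, so the resulting matrix is $M_A + x\,e_1 e_1^t$ (with $e_1$ the first standard basis vector). Expanding its determinant along the first row gives $\det M_A + x\,\det M_A(1\mid 1)$. Equating this with $\det A + x\,\cof A$ and comparing coefficients of $x$ yields $\cof A = \det M_A(1\mid 1)$ (the constant terms merely recover $\det A = \det M_A$, which holds since $M_A$ is obtained from $A$ by determinant-preserving operations).

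The only delicate point is the bookkeeping in the last step: one must check carefully, in the stated order of operations, that after the row operations the occurrences of $x$ outside the first row have disappeared and that after the column operations the only surviving $x$ sits in the $(1,1)$ position, so that the final matrix really is $M_A + x\,e_1 e_1^t$ with $M_A$ exactly as defined. Everything else is standard, and once this verification is in place the conclusion is immediate.
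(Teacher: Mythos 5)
Your argument is correct and complete: the identity $\det(A+xJ)=\det A + x\,\cof A$, the observation that the operations $R_i \leftarrow R_i - R_1$ and then $C_j \leftarrow C_j - C_1$ preserve the determinant and turn $A+xJ$ into $M_A + x\,e_1e_1^t$, and the comparison of coefficients of $x$ all check out. The paper itself gives no proof (the lemma is quoted from Bapat's book), and your derivation is essentially the standard one used there, so there is nothing further to reconcile.
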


Given an $n\times n$ matrix $A$, we will use $P_A(x)$ to denote the characteristic polynomial of $A$. The set of eigenvalues of $A$ is called the spectrum of $A$,  denoted by $\sigma(A)$ and the spectral radius of $A$, denoted by $\rho(A)$ is defined as  $\ds \rho(A)= \max_{\lambda \in \sigma(A) } |\lambda|.$ By Perron-Frobenius theory, if $A$ is an entrywise nonnegative square matrix, then  $\rho(A)$ is the largest eigenvalue of $A$. Moreover, $\rho(A)$ is a simple eigenvalue of $A$ and is called the Perron value of $A$. Furthermore, if $A$ is an $n \times n$  real symmetric matrix,  we use the following convention where the eigenvalues of $A$ are in  decreasing order: 
\begin{equation}\label{eqn:ev-hermitian}
\lambda_{\max}=\lambda_1(A) \geq \lambda_2(A)\geq \cdots \geq \lambda_{n-1}(A)\geq  \lambda_{n}(A)=\lambda_{\min}.
\end{equation}
That is,  $\rho(A)=\lambda_1(A)$. We now state Weyl’s inequality which interlacing inequalities of a rank-one perturbation to a real symmetric matrix.
\begin{theorem}~\cite[Corollary~$4.3.9$]{Horn}\label{thm:interlacing}
Let $A$ and $B$ be real symmetric matrices of order $n \times n$ with eigenvalues ordered as in Eqn.~\eqref{eqn:ev-hermitian} such that $B = A+\mathbf{x}\mathbf{x}^t$, where $\mathbf{x}$ is a column vector. Then, 
\begin{equation*}
		\lambda_1(B) \geq \lambda_1(A) \geq \lambda_2(B) \geq \lambda_2(A) \geq \cdots \geq \lambda_n(B) \geq \lambda_n(A).
	\end{equation*}
\end{theorem}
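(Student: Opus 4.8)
The plan is to deduce both chains of inequalities from the Courant--Fischer min--max characterization of the eigenvalues of a real symmetric matrix. Recall that for a real symmetric $n\times n$ matrix $A$ with eigenvalues ordered as in Eqn.~\eqref{eqn:ev-hermitian}, and for each $k\in\{1,\dots,n\}$,
\[
\lambda_k(A)=\max_{\dim S=k}\ \min_{0\neq v\in S}\frac{v^tAv}{v^tv}
=\min_{\dim S=n-k+1}\ \max_{0\neq v\in S}\frac{v^tAv}{v^tv},
\]
where $S$ ranges over subspaces of $\R^n$ of the indicated dimension. Everything reduces to substituting the relation $B=A+\mathbf{x}\mathbf{x}^t$ into these formulas.

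First I would establish the inequalities $\lambda_k(B)\geq\lambda_k(A)$ for all $k$. Since $\mathbf{x}\mathbf{x}^t$ is positive semidefinite, $v^tBv=v^tAv+(\mathbf{x}^tv)^2\geq v^tAv$ for every $v\in\R^n$, so the Rayleigh quotient of $B$ dominates that of $A$ pointwise. Plugging this into the max--min formula for $\lambda_k$ immediately yields $\lambda_k(B)\geq\lambda_k(A)$.

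The substantive step is to prove $\lambda_k(A)\geq\lambda_{k+1}(B)$ for $k=1,\dots,n-1$; here the rank-one structure of the perturbation is essential. Let $W=\{v\in\R^n:\mathbf{x}^tv=0\}$, so that $\dim W\geq n-1$ and $v^tBv=v^tAv$ for all $v\in W$. Let $S^\ast$ be an $(n-k+1)$-dimensional subspace attaining the min in the min--max formula for $\lambda_k(A)$, so $v^tAv\leq\lambda_k(A)\,v^tv$ for every $v\in S^\ast$. A dimension count gives $\dim(S^\ast\cap W)\geq(n-k+1)+(n-1)-n=n-k$, so we may choose a subspace $T\subseteq S^\ast\cap W$ with $\dim T=n-k$. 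For every nonzero $v\in T$ we then have $v^tBv=v^tAv\leq\lambda_k(A)\,v^tv$, hence $\max_{0\neq v\in T}\frac{v^tBv}{v^tv}\leq\lambda_k(A)$. Using $T$ as a competitor in the min--max formula $\lambda_{k+1}(B)=\min_{\dim S=n-k}\max_{0\neq v\in S}\frac{v^tBv}{v^tv}$ gives $\lambda_{k+1}(B)\leq\lambda_k(A)$.

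Concatenating the two chains yields $\lambda_1(B)\geq\lambda_1(A)\geq\lambda_2(B)\geq\lambda_2(A)\geq\cdots\geq\lambda_n(B)\geq\lambda_n(A)$, as claimed. The only delicate point is guaranteeing that a competitor subspace $T$ of the correct dimension exists, which is precisely where the rank-one (hence vanishing on a hyperplane) nature of $\mathbf{x}\mathbf{x}^t$ is exploited; the rest is direct substitution into Courant--Fischer. An alternative route would be to diagonalize $A$ and analyze the secular equation $1+\sum_i c_i^2/(\mu_i-\lambda)=0$ satisfied by those eigenvalues of $B$ not shared with $A$, reading off the interlacing from the sign changes of this rational function, but the bookkeeping of repeated eigenvalues there is messier than the variational argument above.
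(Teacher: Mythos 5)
Your argument is correct: the pointwise domination of Rayleigh quotients gives $\lambda_k(B)\geq\lambda_k(A)$, and the dimension count inside the hyperplane $\mathbf{x}^t v=0$ intersected with an optimal $(n-k+1)$-dimensional subspace for $\lambda_k(A)$ correctly yields $\lambda_k(A)\geq\lambda_{k+1}(B)$ via Courant--Fischer. The paper does not prove this statement at all --- it imports it as Weyl's inequality from Horn and Johnson (Corollary 4.3.9) --- and your variational proof is essentially the standard argument found in that reference, so there is nothing to reconcile.
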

We will now present a result concerning the spectral radius, which will be useful for the proofs in a later section.
\begin{prop}\label{prop:rank-one}
Let $B$ be an entrywise positive square matrix and $B=A+\mathbf{u}\mathbf{v}^t$, where $A$ is a diagonal matrix with negative entries and $\mathbf{u}, \mathbf{v}$ are column vectors. Then, $\rho(B)$, the spectral radius of $B$, is the only positive eigenvalue of $B$. 
\end{prop}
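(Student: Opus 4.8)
The plan is to analyze the eigenvalues of $B = A + \mathbf{u}\mathbf{v}^t$ by combining the structural constraint (a rank-one perturbation of a negative diagonal matrix) with the Perron--Frobenius theory available because $B$ is entrywise positive. First I would record what Perron--Frobenius gives us directly: since $B$ is entrywise positive, $\rho(B)$ is a simple eigenvalue, it is real and strictly positive, and every other eigenvalue $\lambda$ of $B$ satisfies $|\lambda| < \rho(B)$. So the claim ``$\rho(B)$ is the only positive eigenvalue'' amounts to showing that no eigenvalue of $B$ other than $\rho(B)$ is a positive real number; equivalently, $B$ has exactly one positive eigenvalue counting multiplicity, the rest being zero or negative (or non-real, but symmetry is not assumed here, so I should be careful — see below).

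The key step is a counting argument on the number of positive eigenvalues. Write $A = \operatorname{diag}(a_1, \ldots, a_n)$ with each $a_i < 0$, so $A$ is invertible and $-A$ is positive definite-like in the sense that all its eigenvalues are negative. The matrix $A + \mathbf{u}\mathbf{v}^t$ is a rank-one update of $A$. If $B$ were symmetric, I could invoke Weyl's inequality (Theorem~\ref{thm:interlacing}) in the form: writing $B = A + \mathbf{x}\mathbf{x}^t$ for a suitable real vector $\mathbf{x}$ (possible when $\mathbf{u}\mathbf{v}^t$ is symmetric positive semidefinite of rank one), the interlacing $\lambda_1(B) \geq \lambda_1(A) \geq \lambda_2(B) \geq \lambda_2(A) \geq \cdots$ forces $\lambda_2(B) \leq \lambda_1(A) = \max_i a_i < 0$, so at most one eigenvalue of $B$ is nonnegative; Perron--Frobenius then says exactly one is positive and it equals $\rho(B)$. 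The subtlety is that the proposition as stated does not assume $\mathbf{u} = \mathbf{v}$ or symmetry of $B$, so I would either (i) note that in the intended application $B = \Delta(\mathcal{S})$ is symmetric and $A$, $\mathbf{u}\mathbf{v}^t$ can be taken symmetric, and restrict to that case, or (ii) argue in general that a rank-one perturbation of a matrix with all eigenvalues negative has at most one eigenvalue in the open right half-plane. For (ii), the clean tool is the secular equation: the eigenvalues of $B$ not already eigenvalues of $A$ are the roots of $f(\lambda) = 1 + \mathbf{v}^t (A - \lambda I)^{-1} \mathbf{u} = 0$, and one analyzes sign changes of $f$ on $(0, \infty)$.

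Concretely, I would proceed as follows. Since $a_i<0$ for all $i$, for $\lambda \geq 0$ we have $a_i - \lambda < 0$, hence $\det(A - \lambda I) = \prod_i (a_i - \lambda) \neq 0$, so every eigenvalue of $B$ in $[0,\infty)$ must satisfy the secular equation $g(\lambda) := \det(A - \lambda I) + \mathbf{v}^t \operatorname{Adj}(A - \lambda I)\, \mathbf{u} = 0$, equivalently $1 + \mathbf{v}^t(A - \lambda I)^{-1}\mathbf{u} = 0$. Differentiating, $\frac{d}{d\lambda}\bigl[\mathbf{v}^t(A-\lambda I)^{-1}\mathbf{u}\bigr] = \mathbf{v}^t(A - \lambda I)^{-2}\mathbf{u} = \sum_i \frac{v_i u_i}{(a_i - \lambda)^2}$. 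If I can show $v_i u_i$ all have the same sign — which holds in the symmetric application where $\mathbf{u}=\mathbf{v}$ and $u_i^2 \geq 0$ — then $\mathbf{v}^t(A-\lambda I)^{-1}\mathbf{u}$ is monotone on $(0,\infty)$, so the secular equation has at most one root there; combined with Perron--Frobenius (which guarantees at least one, namely $\rho(B)>0$), there is exactly one. I expect the main obstacle to be precisely this sign issue: making the monotonicity/counting argument airtight without extra hypotheses on $\mathbf{u}$ and $\mathbf{v}$. I would resolve it by observing that for the proposition to be used, $B$ is a real symmetric entrywise positive matrix, so one may assume $B = A + \mathbf{u}\mathbf{u}^t$ (absorbing signs, or writing the rank-one symmetric part as $\pm\mathbf{u}\mathbf{u}^t$; if the sign is negative then $B \preceq A \prec 0$ contradicts positivity, so it must be $+\mathbf{u}\mathbf{u}^t$), and then Theorem~\ref{thm:interlacing} applies verbatim to give $\lambda_2(B) \leq \lambda_1(A) < 0$, finishing the proof in one line after the Perron--Frobenius remark.
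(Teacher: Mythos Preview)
Your diagnosis of the difficulty is accurate: the proposition does not assume $B$ symmetric, so Theorem~\ref{thm:interlacing} does not apply directly, and your final resolution --- retreating to the symmetric case needed in the application --- leaves the proposition as stated unproved. The observation you are missing, which closes the gap in either of your approaches, is that the hypotheses already force $u_iv_i>0$ for every $i$ (indeed, after a harmless global sign change, $u_i,v_i>0$ for all $i$). For $i\ne j$ the off-diagonal entry $B_{ij}=u_iv_j$ is positive, so all $u_i$ share a sign and all $v_j$ share a sign; and $B_{ii}=a_i+u_iv_i>0$ with $a_i<0$ gives $u_iv_i>0$, so the two signs agree. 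With this in hand your secular-equation route already works: $f(\lambda)=1+\sum_i \dfrac{u_iv_i}{a_i-\lambda}$ is strictly increasing on $(\max_i a_i,\infty)\supset[0,\infty)$, hence has at most one positive root, and Perron--Frobenius supplies one.

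The paper's proof uses the same positivity observation but then takes a shorter path that avoids the secular equation altogether: once $u_i,v_i>0$, set $y_i=\sqrt{u_i/v_i}$ and $x_i=\sqrt{u_iv_i}$; then $T=\operatorname{diag}(\mathbf{y})^{-1}B\,\operatorname{diag}(\mathbf{y})=A+\mathbf{x}\mathbf{x}^t$ is symmetric and similar to $B$, so Theorem~\ref{thm:interlacing} applies to $T$ and gives $\lambda_2(T)\le\lambda_1(A)<0$. Hence $B$ has at least $n-1$ negative eigenvalues, and the remaining one is $\rho(B)>0$ by Perron--Frobenius. The diagonal-similarity trick has the side benefit of showing that all eigenvalues of $B$ are real.
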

\begin{proof}
Let $B$ be an $n\times n$ matrix and $B=A+\mathbf{u}\mathbf{v}^t$, where $\mathbf{u}=(u_1,u_2,\ldots, u_n)^t$ and $\mathbf{v}=(v_1,v_2,\ldots, v_n)^t$. Since $B$ is an entrywise positive square matrix, and  $A$ is a diagonal matrix with negative entries, without loss of generality, we assume $u_i, v_i >0$ for all $i$. Let $\mathbf{x}$ and $\mathbf{y}$ be two column vectors of order $n$, where the $i^{th}$ entries of $\mathbf{x}$ and $\mathbf{y}$ are given by
$$x_i= \sqrt{u_iv_i} \mbox{ and } y_i=\sqrt{\frac{u_i}{v_i}},$$
respectively. Let $T=\textup{ diag} (\mathbf{y})^{-1} B \textup{ diag}(\mathbf{y})$. Then, $B$ and $T$ are similar matrices, and $T= A + \mathbf{x} \mathbf{x}^t$. Since all the eigenvalues of $A$ are negative, using Theorem~\ref{thm:interlacing}, $T$ has at least $n-1$ negative eigenvalues. Hence, $B$ has at least $n-1$ negative eigenvalues and  by Perron-Frobenius theory,  $\rho(B)$, the spectral radius of $B$ is an eigenvalue of $B$. This completes the proof.
\end{proof}

For a real  symmetric  matrix $A$, the inertia of  $A$ is  the triplet  $(\mathbf{n}_{+}(A),\mathbf{n}_{0}(A), \mathbf{n}_{-}(A) )$, denoted by $\textup{In}(A)$, where $\mathbf{n}_{+}(A),\mathbf{n}_{0}(A)  \mbox{ and } \mathbf{n}_{-}(A)$ denote the number of positive eigenvalues of $A$, the multiplicity of  $0$  as an eigenvalue of $A$ and the number of negative eigenvalues of $A$, respectively. Now we state a result known as Haynsworth inertia additivity formula.

\begin{prop}\label{prop:inertia}~\cite{Zhang1}
	Let $A$ be a real symmetric matrix partitioned as $$
	\begin{bmatrix}
		A_{11} & A_{12}\\
		A_{21} & A_{22}
	\end{bmatrix}
	$$ where $A_{22}$ is square and nonsingular. Then $$\textup{In}(A) = \textup{In}(A_{22}) + \textup{In}(A_{11} - A_{12}A_{22}^{-1}A_{21})$$
\end{prop}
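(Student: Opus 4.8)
The plan is to realise $A$ as congruent to the block-diagonal matrix with diagonal blocks $S := A_{11} - A_{12}A_{22}^{-1}A_{21}$ (the Schur complement of $A_{22}$ in $A$) and $A_{22}$, and then to combine Sylvester's law of inertia with the trivial fact that inertia is additive over a direct sum of symmetric matrices. In effect, the whole proposition reduces to a block $LDL^{t}$-type factorisation plus two classical facts.

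First I would note that, $A$ being real symmetric and $A_{22}$ being one of its principal submatrices, both $A_{22}$ and $A_{22}^{-1}$ are symmetric and $A_{21} = A_{12}^{t}$. I then set
$$P = \begin{bmatrix} I & 0 \\ A_{22}^{-1}A_{21} & I \end{bmatrix},$$
which is nonsingular with $\det P = 1$ regardless of the off-diagonal block, and whose transpose is $P^{t} = \begin{bmatrix} I & A_{12}A_{22}^{-1} \\ 0 & I \end{bmatrix}$ by the symmetry relations just recorded. A direct block multiplication then verifies the congruence
$$P^{t}\begin{bmatrix} S & 0 \\ 0 & A_{22} \end{bmatrix}P \;=\; \begin{bmatrix} A_{11} & A_{12} \\ A_{21} & A_{22} \end{bmatrix} \;=\; A,$$
the key cancellation being $S + A_{12}A_{22}^{-1}A_{21} = A_{11}$ in the top-left block. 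Since $P$ is invertible, Sylvester's law of inertia gives $\textup{In}(A) = \textup{In}\!\left(\textup{diag}(S,A_{22})\right)$; and because the spectrum of a symmetric block-diagonal matrix is the union, with multiplicities, of the spectra of its diagonal blocks, $\textup{In}\!\left(\textup{diag}(S,A_{22})\right) = \textup{In}(S) + \textup{In}(A_{22})$, where the sum is componentwise addition of the inertia triples. Combining these two equalities yields $\textup{In}(A) = \textup{In}(A_{22}) + \textup{In}(A_{11} - A_{12}A_{22}^{-1}A_{21})$, as claimed.

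I do not anticipate a genuine obstacle. The only point requiring care is the bookkeeping in the block congruence, namely checking that the matrix acting on $\textup{diag}(S,A_{22})$ from the left is exactly the transpose of the one acting from the right; this is precisely where the symmetry of $A$ (hence $A_{21} = A_{12}^{t}$) and of $A_{22}$ is used, and without it the argument would produce only an equivalence, not a congruence. The two structural inputs — Sylvester's law of inertia and additivity of inertia over a direct sum — are classical, so I would simply invoke them.
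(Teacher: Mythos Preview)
Your proof is correct and is the standard argument for the Haynsworth inertia additivity formula. The paper itself does not prove this proposition; it merely states it with a citation to \cite{Zhang1}, so there is no in-paper proof to compare against. What you have written is essentially the classical proof one finds in that reference: block-triangular congruence to $\mathrm{diag}(S,A_{22})$, followed by Sylvester's law of inertia and additivity over direct sums.
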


We will now revisit the concepts of equitable partition and equitable quotient matrix. The equitable quotient matrix is recognised as a powerful tool for studying the spectrum of a matrix. Although the equitable quotient matrix is defined for a more general set-up, we will focus specifically on the results that are pertinent to this article. For a more general set-up, readers may refer~\cite{Horn,You}.

Suppose $A$ is a real symmetric  matrix whose rows and columns are indexed by
$X = \{1, . . . , n \}$. Let $\pi=\{X_1, . . . ,X_t \}$ be a partition of $X$.  Let $A$ be
partitioned according to $\{X_1, . . . ,X_t \}$, \emph{i.e.},
\begin{equation*}\label{eqn:M}
		A = \begin{bmatrix}
			A_{11}       &  \dots & A_{1t} \\
			\vdots & \ddots & \vdots\\
			A_{t1}       &  \dots & A_{tt}
		\end{bmatrix},
	\end{equation*}
where $A_{ij}$ denotes the submatrix (block) of $A$ formed by the rows in $X_i$ and the
columns in $X_j$. Let $q_{ij}(A)$ denote the average row sum of $A_{ij}$. 
Then, the matrix $\mathbf{Q}(A) = [q_{ij}(A)]$ is called the quotient matrix of $A $ with respect to a given partition $\pi$. 	If $A_{ij} \mathds{1} = q_{ij}(A) \mathds{1}$, {\it{i.e.,}} the row sum of each block $A_{ij}$ is constant  then the partition is called equitable, and $\mathbf{Q}(A)$ is called a equitable quotient matrix of $A$. The $n\times t$ matrix $S=[s_{ij}]$ is called the characteristic matrix with respect to the equatable partition  $\pi=\{X_1, . . . , X_t \}$, where $s_{ij}= 1$ if $i\in X_j$ and $0$ otherwise. 

We now state a theorem that gives a relation between the spectrum of a matrix and the spectrum of its quotient matrix. 
\begin{prop}\label{prop:quotient}~\cite{Horn,You}
Let $A$ be a real symmetric matrix and $\mathbf{Q}(A)$ be an equitable quotient matrix of $A$  with respect to a partition $\pi$.  Then, $\sigma (\mathbf{Q}(A)) \subset \sigma (A).$ Furthermore, the following holds.
\begin{enumerate}
\item[($i$)] If $S$ is the characteristic matrix with respect to $\pi$, and $(\lambda, \mathbf{x})$ is an eigenpair of $\mathbf{Q}(A)$, then  $(\lambda, S\mathbf{x})$ is an eigenpair of $A$.

\item[($ii$)] If $A$ is an entrywise nonnegative matrix, then  $\rho (A)= \rho(\mathbf{Q}(A))$.
\end{enumerate}

\end{prop}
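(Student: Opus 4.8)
The plan is to extract everything from the single matrix identity $AS=S\,\mathbf{Q}(A)$. First I would prove this identity by comparing $(i,j)$ blocks: the $(i,j)$ block of $AS$ is $\sum_{k}A_{ik}(S)_{kj}$, and since the $j$-th column of $S$ is the indicator of $X_j$ this collapses to $A_{ij}\mathds{1}$; the equitable hypothesis $A_{ij}\mathds{1}=q_{ij}(A)\mathds{1}$ then rewrites it as $q_{ij}(A)\mathds{1}$, which is precisely the $(i,j)$ block of $S\,\mathbf{Q}(A)$. I would also record two elementary facts about $S$ that will be used repeatedly: its columns have pairwise disjoint supports (and none is zero, as each $X_i\neq\emptyset$), so $S$ has full column rank $t$ and is injective; and $S^{t}S=\textup{diag}(|X_1|,\ldots,|X_t|)=:\Lambda$ is an invertible diagonal matrix, which also gives $\mathbf{Q}(A)=\Lambda^{-1}S^{t}AS$.

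Part $(i)$ and the inclusion $\sigma(\mathbf{Q}(A))\subseteq\sigma(A)$ then drop out immediately: if $\mathbf{Q}(A)\mathbf{x}=\lambda\mathbf{x}$ with $\mathbf{x}\neq\mathbf{0}$, left-multiplying by $S$ and using the identity gives $A(S\mathbf{x})=S\,\mathbf{Q}(A)\mathbf{x}=\lambda(S\mathbf{x})$, and $S\mathbf{x}\neq\mathbf{0}$ because $S$ is injective, so $(\lambda,S\mathbf{x})$ is a genuine eigenpair of $A$.

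For part $(ii)$, one direction is easy: $\mathbf{Q}(A)$ is entrywise nonnegative (each entry is an average of entries of $A$), so by Perron--Frobenius $\rho(\mathbf{Q}(A))\in\sigma(\mathbf{Q}(A))\subseteq\sigma(A)$, hence $\rho(\mathbf{Q}(A))\leq\rho(A)$. For the reverse inequality I would push the Perron eigenvector of $A$ into the column space of $S$. Let $\mathbf{v}\geq\mathbf{0}$, $\mathbf{v}\neq\mathbf{0}$, be an eigenvector of $A$ for $\rho(A)$, and let $\Pi=S\Lambda^{-1}S^{t}$ be the orthogonal projection onto $\textup{col}(S)$ (concretely, $\Pi$ replaces each coordinate by the average of the coordinates over its part of $\pi$). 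The identity $AS=S\,\mathbf{Q}(A)$ says $\textup{col}(S)$ is $A$-invariant, and since $A$ is symmetric so is $\textup{col}(S)^{\perp}$; therefore $\Pi$ commutes with $A$, and $A(\Pi\mathbf{v})=\Pi(A\mathbf{v})=\rho(A)\,\Pi\mathbf{v}$. Crucially, $\mathbf{v}\geq\mathbf{0}$ and $\mathbf{v}\neq\mathbf{0}$ force $\Pi\mathbf{v}\geq\mathbf{0}$ and $\Pi\mathbf{v}\neq\mathbf{0}$ (averaging a nonnegative nonzero vector blockwise cannot annihilate it), so $\Pi\mathbf{v}=S\mathbf{x}$ for some $\mathbf{x}\neq\mathbf{0}$; feeding this into $AS\mathbf{x}=\rho(A)S\mathbf{x}=S\,\mathbf{Q}(A)\mathbf{x}$ and using injectivity of $S$ gives $\mathbf{Q}(A)\mathbf{x}=\rho(A)\mathbf{x}$. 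Hence $\rho(A)\in\sigma(\mathbf{Q}(A))$, so $\rho(A)\leq\rho(\mathbf{Q}(A))$, and together with the previous inequality $\rho(A)=\rho(\mathbf{Q}(A))$.

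The only delicate step is this reverse inequality in $(ii)$: the naive attempt — passing to the symmetric compression $(S\Lambda^{-1/2})^{t}A(S\Lambda^{-1/2})$, which is similar to $\mathbf{Q}(A)$, and invoking the variational characterization of $\lambda_{\max}$ — only reproduces $\rho(\mathbf{Q}(A))\leq\rho(A)$, not the inequality still needed. One genuinely has to use the interplay between entrywise nonnegativity and the averaging projection $\Pi$ to conclude that the Perron vector of $A$ is constant on the parts of $\pi$. Everything else is routine bookkeeping with the identity $AS=S\,\mathbf{Q}(A)$.
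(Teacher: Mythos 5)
Your proof is correct, and it is worth noting that the paper itself gives no argument for this proposition: it is quoted as a known result with citations to Horn--Johnson and to You, Yang, So and Xi, so there is no in-paper proof to match. Your derivation of the identity $AS=S\,\mathbf{Q}(A)$, the injectivity of $S$, and part $(i)$ together with $\sigma(\mathbf{Q}(A))\subseteq\sigma(A)$ coincide with the standard argument in those references. For part $(ii)$ you take a somewhat more specialized but self-contained route: you exploit the symmetry of $A$ (which the proposition does assume) to conclude that the averaging projection $\Pi=S\Lambda^{-1}S^{t}$ commutes with $A$, and then push a nonnegative $\rho(A)$-eigenvector into $\mathrm{col}(S)$; the cited result of You et al.\ is proved for general (not necessarily symmetric) nonnegative matrices by different Perron--Frobenius bookkeeping, so your argument is narrower in scope but entirely adequate here, and your closing remark that the Rayleigh-quotient compression only yields $\rho(\mathbf{Q}(A))\leq\rho(A)$ is accurate. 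The only point you should make explicit is the existence of a nonnegative eigenvector of $A$ associated with $\rho(A)$ when $A$ is nonnegative but possibly reducible; this is the standard Perron--Frobenius statement for nonnegative matrices (e.g.\ Horn--Johnson, Theorem~8.3.1), so it is a citation to add rather than a gap, and with it every step (blockwise averaging preserving nonnegativity and nonvanishing, injectivity of $S$, and the two inequalities combining to $\rho(A)=\rho(\mathbf{Q}(A))$) goes through.
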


%%\begin{theorem}\label{thm:quesient}~\cite{You}
%%Let $M$ be a symmetric real matrix such that a quotient matrix $\mathbf{Q}(M)$ of $M$ is equitable. Then $\sigma (\mathbf{Q}(M)) \subset \sigma (M)$. Furthermore, if $M$ is a nonnegative matrix, then $\rho (M)= \rho(\mathbf{Q}(M))$. 
%%\end{theorem}

\section{Determinant, Cofactor and Inertia of $ \Delta (\mathcal{S}(n_1,n_2,\cdots,n_b))$}\label{sec:det-cof-iner}
In this section, we calculate the determinant and cofactor of the squared distance matrix for a starlike block graph, and we also investigate its inertia. To achieve these goals, we will use results related to equitable partitions of matrices, specifically concerning the squared distance matrix, along with one of its principal submatrices for the starlike block graph and on a few related matrices.

Let $\mathcal{S}(n_1,n_2,\cdots,n_b)$ be the starlike block graph on $n$ vertices with blocks $K_{n_1+1},K_{n_2+1},\cdots,K_{n_b+1}$  with a central cut vertex. Then $n=1+\sum_{i=1}^b n_i$. Let $\pi=\{V_0,V_1,V_2,\cdots, V_b\}$ be a partition of vertex set of  $\mathcal{S}(n_1,n_2,\cdots,n_b)$, where $V_0$ is the singleton set consisting of the central cut vertex and for $1\leq i\leq b$, $V_i$ consisting of vertices of block $K_{n_i+1}$  except the central cut vertex. In view of the partition  $\pi$, the matrix $\Delta(\mathcal{S}(n_1,n_2,\cdots,n_b))$ can be expressed in the following block form:
\begin{equation}\label{eqn:Delta-starlike}
\Delta=\Delta(\mathcal{S}(n_1,n_2,\cdots,n_b)) = \left[
\begin{array}{c|c|c|c|c}
    0  & \mathds{1}_{n_1}^t& \mathds{1}_{n_2}^t & \cdots & \mathds{1}_{n_b}^t\\
    \midrule
	\mathds{1}_{n_1} & J_{n_1}-I_{n_1} & 4J_{n_1 \times n_2} & \cdots & 4J_{n_1 \times n_b}\\
	\midrule
	\mathds{1}_{n_2} &4J_{n_2 \times n_1} & J_{n_2}-I_{n_2}  & \cdots & 4J_{n_2 \times n_b}\\
	\midrule
	\vdots&\vdots&\vdots&\ddots&\vdots\\
	\midrule
	\mathds{1}_{n_b} &4J_{n_b \times n_1} & 4J_{n_b \times n_2}  & \cdots & J_{n_b}-I_{n_b} \\
\end{array} \right].
\end{equation}
Another block matrix representation of the squared distance matrix $\Delta= 
	\begin{bmatrix}
		\Delta_{11} & \Delta_{12}\\
		\Delta_{21} & \Delta_{22}
	\end{bmatrix},
		$
		where $\Delta_{11}=0$, $\Delta_{12}^t=\Delta_{21}=\mathds{1}_{n-1}$ and 
\begin{equation}\label{eqn:Delta22}
\Delta_{22}= \left[
\begin{array}{c|c|c|c}
   	 J_{n_1}-I_{n_1} & 4J_{n_1 \times n_2} & \cdots & 4J_{n_1 \times n_b}\\
	\midrule
	4J_{n_2 \times n_1} & J_{n_2}-I_{n_2}  & \cdots & 4J_{n_2 \times n_b}\\
	\midrule
	\vdots&\vdots&\ddots&\vdots\\
	\midrule
	4J_{n_b \times n_1} & 4J_{n_b \times n_2}  & \cdots & J_{n_b}-I_{n_b} \\
\end{array} \right].
\end{equation}
Thus, $\Delta_{22}$ is the principal submatrix of $\Delta$, and the block matrix form of $\Delta_{22}$ is with respect to the vertex partition  $\widetilde{ \pi}=\{V_1,V_2,\cdots, V_b\}$. 
The quotient matrices of  $\Delta$ and $\Delta_{22}$ with respect to the partitions  $\pi=\{V_0,V_1,V_2,\cdots, V_b\}$ and  $\widetilde{ \pi}=\{V_1,V_2,\cdots, V_b\}$ are given by
\begin{equation}\label{eqn:Q_Delta}
	\mathbf{Q}(\Delta) = \left[
	\begin{array}{c|cccc}
		0 & n_1 & n_2 & \cdots & n_b\\
		\midrule
		1 & n_1-1 & 4n_2 & \cdots & 4n_b\\
		1 & 4n_1 & n_2-1 & \cdots & 4n_b\\
		\vdots&\vdots&\vdots&\ddots&\vdots\\
		1 & 4n_1 & 4n_2 & \cdots & n_b-1
	\end{array} \right]=
	\left[
\begin{array}{c|c c}
0 & n_1 \ n_2 \dots n_b    \\ \hline
1 &  \\  
\vdots  &   \mathbf{Q}(\Delta_{22})  \\
1   &\\
1   &\\
\end{array}
\right].
\end{equation}

We will organise the results of this section into three subsections. In the first two subsections, we will calculate the determinant and cofactor for both \(\Delta\) and \(\Delta_{22}\). Finally, in the last subsection, we will use the results for \(\det \Delta_{22}\) and \(\cof \Delta_{22}\) to obtain  the inertia of \(\Delta\).

\subsection{Determinant of $\Delta (\mathcal{S}(n_1,n_2,\cdots,n_b))$}
In this section, we will computes the $\det \Delta$ and $\det \Delta_{22}.$ We begin with a lemma that compute the characteristic polynomial of the quotient matrices  $\mathbf{Q}(\Delta)$ and $\mathbf{Q}(\Delta_{22})$.

\begin{lem}\label{lem:char-Q-Delta}
Let $\Delta$ be the squared distance matrix of the starlike block graph $\mathcal{S}(n_1,n_2,\cdots,n_b)$ and $\Delta_{22}$ be the principal submatrix of $\Delta$ as defined in Eqn.~\eqref{eqn:Delta22}. If $\mathbf{Q}(\Delta)$ and $\mathbf{Q}(\Delta_{22})$ is the quotient matrices of $\Delta$ and $\Delta_{22}$ as defined in Eqn.~\eqref{eqn:Q_Delta}, then the characteristic polynomial of the $\mathbf{Q}(\Delta)$ and $\mathbf{Q}(\Delta_{22})$ are given by
$$ P_{\mathbf{Q}(\Delta)}(x)= x\prod_{i=1}^{b}(x+3n_i+1) -(4x+1)\sum_{i=1}^b n_i \prod_{j \ne i} (x+3n_i+1),$$
and 
$$ P_{\mathbf{Q}(\Delta_{22})}(x)=\prod_{i =1}^{b}(x+3n_i+1) - 4\sum_{i =1}^{b} n_i \prod_{j \ne i} (x+3n_j+1).$$
\end{lem}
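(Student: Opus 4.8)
The plan is to compute both characteristic polynomials by direct row/column reduction of the $(b+1)\times(b+1)$ and $b\times b$ matrices $xI-\mathbf{Q}(\Delta)$ and $xI-\mathbf{Q}(\Delta_{22})$, exploiting the fact that each off-diagonal block structure is built from the rank-one pieces $4n_j$ in column $j$ (rows $1$ through $b$) and $n_j$ in column $j$ (row $0$). First I would handle $\mathbf{Q}(\Delta_{22})$. Writing $xI-\mathbf{Q}(\Delta_{22})$ explicitly, the $(i,i)$ entry is $x-n_i+1$ and the $(i,j)$ entry for $i\neq j$ is $-4n_j$. I would add and subtract to put this in the form $\mathrm{diag}(x+3n_i+1) - 4\,\mathds{1}\,(n_1,\ldots,n_b)$, i.e. a diagonal matrix plus a rank-one term. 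Then the determinant follows from the matrix determinant lemma (equivalently, the standard identity $\det(D-\mathbf{u}\mathbf{v}^t) = \det(D)\,(1-\mathbf{v}^t D^{-1}\mathbf{u})$ when $D$ is invertible, then extended to all $x$ by polynomial continuity): with $D=\mathrm{diag}(x+3n_i+1)$, $\mathbf{u}=4\mathds{1}$, $\mathbf{v}=(n_1,\ldots,n_b)^t$, one gets
$$
P_{\mathbf{Q}(\Delta_{22})}(x)=\det(xI-\mathbf{Q}(\Delta_{22}))=\prod_{i=1}^b (x+3n_i+1)\left(1-4\sum_{i=1}^b \frac{n_i}{x+3n_i+1}\right),
$$
which is exactly the claimed expression after clearing denominators.

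For $\mathbf{Q}(\Delta)$ I would use the bordered structure displayed in Eqn.~\eqref{eqn:Q_Delta}: $xI-\mathbf{Q}(\Delta)$ has first row $(x,\,-n_1,\ldots,-n_b)$, first column $(x,-1,\ldots,-1)^t$, and lower-right $b\times b$ block $xI-\mathbf{Q}(\Delta_{22})$. Expanding along the first row (or using the Schur complement with respect to the $(1,1)$ entry $x$, again valid for $x\neq 0$ and extended by continuity) gives $P_{\mathbf{Q}(\Delta)}(x)=x\det(xI-\mathbf{Q}(\Delta_{22})) - (\text{border correction})$. The border correction is a rank-one update: subtracting $\tfrac1x$ times the outer product of the first column's tail $(-1,\ldots,-1)^t$ with the first row's tail $(-n_1,\ldots,-n_b)$ modifies the diagonal-plus-rank-one matrix into $\mathrm{diag}(x+3n_i+1) - (4+\tfrac1x)\mathds{1}(n_1,\ldots,n_b)$, whose determinant is again computed by the matrix determinant lemma. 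Multiplying back through by $x$ yields
$$
P_{\mathbf{Q}(\Delta)}(x)=x\prod_{i=1}^b(x+3n_i+1) - (4x+1)\sum_{i=1}^b n_i\prod_{j\neq i}(x+3n_i+1),
$$
matching the stated formula. Alternatively, one can avoid the $x\neq0$ caveat entirely by doing the elementary row operations $R_i \leftarrow R_i - R_j$ among rows $2,\ldots,b+1$ to expose factors $(x+3n_i+1)$ directly, then finishing with a one-dimensional expansion.

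The only mild subtlety — and the place to be careful — is the division by $x+3n_i+1$ and by $x$ when applying the determinant lemma; these are handled by noting both sides are polynomials in $x$ agreeing on all but finitely many values, hence identically equal, or by performing the reductions purely via integer-coefficient elementary operations and never dividing at all. I expect no genuine obstacle here: the result is a routine determinant computation, and the main task is simply bookkeeping to confirm that the rank-one correction from the border contributes precisely the factor $(4x+1)$ in place of $4x$ (equivalently $4+\tfrac1x$ in place of $4$), which is the sole structural difference between $P_{\mathbf{Q}(\Delta)}$ and $x\cdot P_{\mathbf{Q}(\Delta_{22})}$ up to the lower-order term.
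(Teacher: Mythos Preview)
Your proposal is correct. The paper takes a slightly different but equally routine route: rather than invoking the matrix determinant lemma, it performs explicit elementary row operations directly on $xI-\mathbf{Q}(\Delta)$ and $xI-\mathbf{Q}(\Delta_{22})$. For $\mathbf{Q}(\Delta)$ the paper applies $R_i\leftarrow R_i-4R_1$ for $2\le i\le b+1$, which turns the lower-right $b\times b$ block into $\mathrm{diag}(x+3n_i+1)$ while filling the first column below the $(1,1)$ entry with $-(4x+1)$; cofactor expansion along the first row then yields the formula. For $\mathbf{Q}(\Delta_{22})$ it applies $R_i\leftarrow R_i-R_1$ for $2\le i\le b$ and expands analogously. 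Your diagonal-plus-rank-one recognition is more conceptual and makes the form of the answer visible in advance, at the cost of the polynomial-continuity caveat you already flagged; the paper's row reduction stays within polynomial arithmetic throughout and never divides, so no such caveat is needed. Both arrive at the same place with comparable effort.
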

\begin{proof}
We use elementary row operations on matrices $xI-\mathbf{Q}(\Delta)$ and $xI-\mathbf{Q}(\Delta_{22})$ to compute the characteristic polynomial of $\mathbf{Q}(\Delta)$ and $\mathbf{Q}(\Delta_{22})$, respectively. Note that,
$$
xI - \mathbf{Q}(\Delta) = \left[
\begin{array}{c|cccc}
	x & -n_1 & -n_2 & \cdots & -n_b\\
	\midrule
	-1 & x-n_1+1 & -4n_2 & \cdots & -4n_b\\
	-1 & -4n_1 & x-n_2+1 & \cdots & -4n_b\\
	\vdots&\vdots&\vdots&\ddots&\vdots\\
	-1 & -4n_1 & -4n_2 & \cdots & x-n_b+1
\end{array} \right]= 
\left[
\begin{array}{c|c c}
x & -n_1 \ -n_2 \dots -n_b    \\ \hline
-1 &  \\  
\vdots  &   xI-\mathbf{Q}(\Delta_{22})  \\
-1   &\\
-1   &\\
\end{array}
\right]
$$
For each $2\leq i\leq b+1,$ applying the elementary row operation $R_i \leftarrow R_i -4 R_1$  on $xI-\mathbf{Q}(\Delta)$, the resulting matrix is given by
\begin{equation*}
\left[ \begin{array}{ccccc}
	x & -n_1 & -n_2 & \cdots & -n_b\\
	-4x-1 & x+3n_1+1 & 0 & \cdots & 0\\
	-4x-1 & 0 & x+3n_2+1 & \cdots & 0\\
	\vdots&\vdots&\vdots&\ddots&\vdots\\
	-4x-1 & 0 & 0 & \cdots & x+3n_b+1
\end{array} \right],
\end{equation*} and expanding along the first row gives
$$
\det(xI - \mathbf{Q}(\Delta)) = x\prod_{i=1}^{b}(x+3n_i+1) -(4x+1)\sum_{i=1}^b n_i \prod_{j \ne i} (x+3n_i+1).$$

Next, applying the elementary row operation $R_i \leftarrow R_i - R_1$ for each $2\leq i\leq b,$  on $xI-\mathbf{Q}(\Delta_{22})$,  the resulting matrix is 

\begin{equation*}
\left[ \begin{array}{cccc}
		(x+3n_1+1)-4n_1 & -4n_2 & \cdots & -4n_b\\
	 -(x+3n_1+1) & x+3n_2+1 & \cdots & 0\\
	\vdots&\vdots&\ddots&\vdots\\
	 -(x+3n_1+1)& 0 & \cdots & x+3n_b+1
\end{array} \right],
\end{equation*} and expanding along the first row gives
$$\det(xI - \mathbf{Q}(\Delta_{22}))=\prod_{i =1}^{b}(x+3n_i+1) - 4\sum_{i =1}^{b} n_i \prod_{j \ne i} (x+3n_j+1). $$
This completes the proof.
\end{proof}

The following result indicates that \(-1\) is an eigenvalue for both \(\Delta\) and \(\Delta_{22}\).

\begin{lem}\label{lem:ev_Delta_minus1}
Let $\Delta$ be the squared distance matrix of the starlike block graph $\mathcal{S}(n_1,n_2,\cdots,n_b)$ and $\Delta_{22}$ be the principal submatrix of $\Delta$ as defined in Eqn.~\eqref{eqn:Delta22}. Then, $-1$ is an eigenvalue of both the matrices $\Delta$ and $\Delta_{22}$ with multiplicity at least $\ds \sum_{i=1}^b n_i -b.$ 
\end{lem}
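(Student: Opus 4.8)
The plan is to exhibit an explicit $\left(\sum_{i=1}^b n_i - b\right)$-dimensional eigenspace of $\Delta_{22}$ (and, after padding, of $\Delta$) associated with the eigenvalue $-1$. For each $i$ with $1 \le i \le b$, let $W_i \subset \mathbb{R}^{n_i}$ be the hyperplane of vectors whose coordinates sum to zero, so $\dim W_i = n_i - 1$. Given $\mathbf{y}_i \in W_i$, consider the vector $\mathbf{x} \in \mathbb{R}^{n-1}$ (indexed according to $\widetilde{\pi} = \{V_1,\ldots,V_b\}$) that equals $\mathbf{y}_i$ on the coordinates of $V_i$ and is zero elsewhere.

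The key computation is to apply $\Delta_{22}$ (in the block form of Eqn.~\eqref{eqn:Delta22}) to such an $\mathbf{x}$. On the block corresponding to $V_i$ the action is $(J_{n_i} - I_{n_i})\mathbf{y}_i = (\mathds{1}_{n_i}^t \mathbf{y}_i)\mathds{1}_{n_i} - \mathbf{y}_i = -\mathbf{y}_i$, since $\mathbf{y}_i \in W_i$; on every block corresponding to $V_j$ with $j \ne i$ the action is $4 J_{n_j \times n_i}\mathbf{y}_i = 4(\mathds{1}_{n_i}^t\mathbf{y}_i)\mathds{1}_{n_j} = \mathbf{0}$. Hence $\Delta_{22}\mathbf{x} = -\mathbf{x}$. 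Doing this for all $i$, the subspace $\bigoplus_{i=1}^b W_i$ (realized inside $\mathbb{R}^{n-1}$ via disjoint supports) lies in the $(-1)$-eigenspace of $\Delta_{22}$; since the supports are disjoint this sum is direct, so its dimension is $\sum_{i=1}^b (n_i - 1) = \sum_{i=1}^b n_i - b$, giving the claim for $\Delta_{22}$.

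For $\Delta$ itself (in the block form $\begin{bmatrix}\Delta_{11} & \Delta_{12} \\ \Delta_{21} & \Delta_{22}\end{bmatrix}$ with $\Delta_{11}=0$, $\Delta_{21}=\mathds{1}_{n-1}$, $\Delta_{12}=\mathds{1}_{n-1}^t$), I would take the same vectors and prepend a $0$ in the $V_0$-coordinate. Then the $V_0$-entry of $\Delta$ applied to this padded vector is $\mathds{1}_{n-1}^t \mathbf{x} = \sum_i \mathds{1}_{n_i}^t \mathbf{y}_i = 0$, while the lower block contributes $\mathds{1}_{n-1}\cdot 0 + \Delta_{22}\mathbf{x} = -\mathbf{x}$; so the padded vector is again a $(-1)$-eigenvector of $\Delta$, and the same dimension count applies. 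There is no real obstacle here — the only point needing care is the verification that $\mathds{1}_{n_i}^t\mathbf{y}_i = 0$ kills both the off-diagonal $4J$ blocks and the $J$ part of the diagonal block, and that disjointness of supports makes the direct sum count legitimate; both are immediate.
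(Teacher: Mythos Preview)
Your proof is correct and is essentially the same as the paper's: the paper also builds the $(-1)$-eigenspace from vectors supported on a single block $V_i$ with zero coordinate sum, using the explicit basis $\mathbf{e}(i,j)=e_i-e_j$ rather than the whole hyperplane $W_i$, and then pads by a zero in the $V_0$-coordinate to pass from $\Delta_{22}$ to $\Delta$. The verification and dimension count are identical.
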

\begin{proof}
Let $\mathbf{e}(i,j)$ be a column vector of order $\sum_{i=1}^b n_i$ whose $i^{th}$ entry is $1$ and $j^{th}$ entry is $-1$.  Let
	\begin{align*}
		\mathcal{E} = \{\mathbf{e}(1,j) \mid j=2,\cdots,n_1\} & \cup \{\mathbf{e}(n_1+1,n_1+j) \mid j=2,\cdots,n_2\}\\
		& \cup \cdots \cup \{\mathbf{e} \left(\sum_{k=1}^{b-1}n_k+1,\sum_{k=1}^{b-1}n_k+j \right) \mid j=2,\cdots,n_b\}.
	\end{align*}
Note that, if  $\mathbf{e}(i,j)$ is a column vector of conformal order as $J-I$, then $\mathbf{e}(i,j)$ is an eigenvector of $J-I$ corresponds to the eigenvalue $-1.$ Therefore, $\Delta_{22}\mathbf{x}= (-1)\mathbf{x}$ for all $\mathbf{x} \in \mathcal{E}$. Since  each set in the union in $\mathcal{E}$ corresponds to the partition $\widetilde{ \pi}=\{V_1,V_2,\cdots, V_b\}$ and hence the cardinality of the set $\mathcal{E}$ is $| \mathcal{E}|= \sum_{i=1}^b n_i -b.$ Therefore, $-1$ is eigenvalue of $\Delta_{22}$ with multiplicity at least $ \sum_{i=1}^b n_i -b.$ 

Recall,  $\Delta=\left[\begin{array}{c|c}
		0 & \mathds{1}^t\\
		\midrule
		\mathds{1}& \Delta_{22}
		\end{array}	\right],
		$ and hence for all $\mathbf{x} \in \mathcal{E}$, we have
		$$\left[\begin{array}{c|c}
		0 & \mathds{1}^t\\
		\midrule
		\mathds{1}& \Delta_{22}
		\end{array}	\right]	
		\left[\begin{array}{c}
		0\\
		\midrule
		\mathbf{x}
		\end{array}	\right]=
		\left[\begin{array}{c}
		 \mathds{1}^t \mathbf{x}\\
		\midrule
		\Delta_{22} \mathbf{x}
		\end{array}	\right]	=
		\left[\begin{array}{c}
		 0\\
		\midrule
		- \mathbf{x}
		\end{array}	\right]=
		(-1) \left[\begin{array}{c}
		 0\\
		\midrule
		\mathbf{x}
		\end{array}	\right].		
		$$
This concludes the proof.
\end{proof}

\begin{rem}
From the proof of Lemma~\ref{lem:ev_Delta_minus1}, it is evident that the contribution to the multiplicity of the eigenvalue \(-1\) is \(n_i - 1\) for each block \(K_{n_i + 1}\), where \(1 \leq i \leq b\). Thus, if any of the blocks is \(K_2\), the contribution becomes \(0\). It is also reflected in the proof of Lemma~\ref{lem:ev_Delta_minus1}, where the set corresponding to the block \(K_2\) in the set \(\mathcal{E}\) is an empty set. Therefore, the Lemma~\ref{lem:ev_Delta_minus1} holds regardless of the sizes of the blocks.
\end{rem}

We now state and prove a theorem that determines the characteristic polynomial $\Delta$ and $\Delta_{22}$.

\begin{theorem}\label{thm:char-poly}
Let $\Delta$ be the squared distance matrix of the starlike block graph $\mathcal{S}(n_1,n_2,\cdots,n_b)$ and $\Delta_{22}$ be the principal submatrix of $\Delta$ as defined in Eqn.~\eqref{eqn:Delta22}. Then, the characteristic polynomial of $\Delta$ and $\Delta_{22}$ are given by
$$
P_{\Delta}(x)=	(x+1)^{\sum_{i=1}^{b}n_i - b}  \left(x\prod_{i=1}^{b}(x+3n_i+1) -(4x+1)\sum_{i=1}^b n_i \prod_{j \ne i} (x+3n_i+1)\right),
	$$
	and
$$P_{\Delta_{22}}(x)=	(x+1)^{\sum_{i=1}^{b}n_i - b}  \left(\prod_{i =1}^{b}(x+3n_i+1) - 4\sum_{i =1}^{b} n_i \prod_{j \ne i} (x+3n_j+1)\right).$$
	
\end{theorem}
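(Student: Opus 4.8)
The plan is to exploit the equitable partitions $\pi=\{V_0,V_1,\ldots,V_b\}$ and $\widetilde\pi=\{V_1,\ldots,V_b\}$ at the level of invariant subspaces, reducing the characteristic polynomial of $\Delta$ (resp.\ $\Delta_{22}$) to that of its quotient matrix times a power of $x+1$. Write $m=\sum_{i=1}^b n_i-b$ and let $S$ be the $n\times(b+1)$ characteristic matrix of $\pi$; its columns are the indicator vectors of the parts, so $S$ has full column rank. First I would record the elementary identity $\Delta S=S\,\mathbf{Q}(\Delta)$, valid because $\pi$ is equitable (every block of $\Delta$ has constant row sums). It follows that $U:=\operatorname{col}(S)$ is $\Delta$-invariant and that, in the basis formed by the columns of $S$, the operator $\Delta|_U$ is represented by $\mathbf{Q}(\Delta)$; since $\Delta$ is real symmetric, $U^\perp=\ker(S^t)$ is $\Delta$-invariant too. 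Splitting $\R^n=U\oplus U^\perp$ into $\Delta$-invariant summands gives
\[
P_\Delta(x)=P_{\mathbf{Q}(\Delta)}(x)\cdot P_{\Delta|_{U^\perp}}(x),
\]
where $\deg P_{\Delta|_{U^\perp}}=n-(b+1)=m$. Thus it remains to identify $P_{\Delta|_{U^\perp}}(x)$ as $(x+1)^m$.

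For this I would invoke Lemma~\ref{lem:ev_Delta_minus1}. The $(-1)$-eigenvectors $(0;\mathbf{x})$ of $\Delta$ exhibited there, one for each $\mathbf{x}\in\mathcal{E}$, are linearly independent (vectors attached to distinct blocks have disjoint supports, and within a block the $\mathbf{e}(i,j)$ are the standard independent difference vectors), there are $|\mathcal{E}|=m$ of them, and each lies in $\ker(S^t)$ since it vanishes on $V_0$ and sums to zero on every $V_i$. Hence $U^\perp$, which has dimension exactly $m$, admits a basis of $(-1)$-eigenvectors of $\Delta$, so $\Delta|_{U^\perp}=-I_m$ and $P_{\Delta|_{U^\perp}}(x)=(x+1)^m$. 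Substituting this together with the formula for $P_{\mathbf{Q}(\Delta)}(x)$ from Lemma~\ref{lem:char-Q-Delta} yields the asserted expression for $P_\Delta(x)$.

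The statement for $\Delta_{22}$ is obtained by the same argument run with $\widetilde\pi$: its characteristic matrix $S_0$ is $(\sum_i n_i)\times b$ of full column rank with $\dim\ker(S_0^t)=\sum_i n_i-b=m$, the same set $\mathcal{E}$ (now with no leading zero) supplies $m$ independent $(-1)$-eigenvectors of $\Delta_{22}$ inside $\ker(S_0^t)$, and Lemma~\ref{lem:char-Q-Delta} supplies $P_{\mathbf{Q}(\Delta_{22})}(x)$. I expect the only delicate point — the remainder being bookkeeping — to be the equitable-partition splitting $P_\Delta=P_{\mathbf{Q}(\Delta)}\cdot P_{\Delta|_{U^\perp}}$: one must verify $\Delta S=S\,\mathbf{Q}(\Delta)$, that $S$ is injective, and that symmetry of $\Delta$ makes $U^\perp$ invariant, so that the characteristic polynomial genuinely factors across $U\oplus U^\perp$ with the $U$-factor equal to $P_{\mathbf{Q}(\Delta)}$. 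One can sidestep this by instead writing $\Delta=-I_n+SCS^t$ for an explicit $(b+1)\times(b+1)$ matrix $C$ and applying Sylvester's determinant identity to $(x+1)I_n-SCS^t$, which collapses $\det(xI_n-\Delta)$ to $(x+1)^m\det(xI_{b+1}-\mathbf{Q}(\Delta))$ once one observes $\mathbf{Q}(\Delta)=-I_{b+1}+CS^tS$; the same device handles $\Delta_{22}$.
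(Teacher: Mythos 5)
Your proof is correct, and it reuses the paper's two key lemmas (the formulas for $P_{\mathbf{Q}(\Delta)}$, $P_{\mathbf{Q}(\Delta_{22})}$ and the $(-1)$-eigenvectors from the set $\mathcal{E}$), but it assembles them differently from the paper. The paper argues by counting: it first checks that $-1$ is \emph{not} a root of $P_{\mathbf{Q}(\Delta)}$, invokes $\sigma(\mathbf{Q}(\Delta))\subset\sigma(\Delta)$ to conclude that $\Delta$ has $b+1$ eigenvalues different from $-1$, and then combines this with the lower bound $\sum_i n_i-b$ on the multiplicity of $-1$ to force all multiplicities to be exact, whence the factorization. You instead produce the factorization structurally: $\Delta S=S\,\mathbf{Q}(\Delta)$ makes $U=\operatorname{col}(S)$ invariant with $\Delta|_U$ represented by $\mathbf{Q}(\Delta)$, symmetry makes $U^{\perp}=\ker(S^t)$ invariant, and the $\sum_i n_i-b$ independent $(-1)$-eigenvectors (which vanish on $V_0$ and sum to zero on each $V_i$, hence lie in $\ker(S^t)$) span $U^{\perp}$, so $\Delta|_{U^{\perp}}=-I$ and $P_\Delta=(x+1)^{\sum_i n_i-b}P_{\mathbf{Q}(\Delta)}$ directly; the same scheme handles $\Delta_{22}$. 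This buys you two things: you never need the (true but separately verified) fact that $-1\notin\sigma(\mathbf{Q}(\Delta))$, and the exact multiplicity of $-1$ comes out as a byproduct rather than being forced by a count. Your alternative via $\Delta+I_n=SCS^t$ with $\mathbf{Q}(\Delta)=-I_{b+1}+CS^tS$ and the Sylvester/Weinstein--Aronszajn identity $\det\bigl((x+1)I_n-SCS^t\bigr)=(x+1)^{n-b-1}\det\bigl((x+1)I_{b+1}-CS^tS\bigr)$ is even more economical, since it bypasses the eigenvector lemma altogether; the identity $\Delta+I_n=SCS^t$ is easily checked blockwise (the $(V_i,V_i)$ blocks become $J_{n_i}$, the mixed blocks are already constant), so this route is sound as well. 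The remaining details you flag (full column rank of $S$, linear independence of $\mathcal{E}$, the dimension count) are all routine and correctly handled.
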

\begin{proof}
Using Lemma~\ref{lem:char-Q-Delta}, we observe that since \( P_{\mathbf{Q}(\Delta)}(-1) \neq 0 \), it follows that \(-1\) is not an eigenvalue of the quotient matrix \( \mathbf{Q}(\Delta) \). Using Theorem~\ref{prop:quotient}, \( \sigma(\mathbf{Q}(\Delta)) \subset \sigma(\Delta) \). This implies that \(\Delta\) has \( b+1 = n - \left(\sum_{i=1}^b n_i - b\right) \) eigenvalues other than \(-1\). Additionally, from Lemma~\ref{lem:ev_Delta_minus1}, we know that \(-1\) is an eigenvalue of \(\Delta\) with a multiplicity of at least \(\sum_{i=1}^b n_i- b\). Therefore, the multiplicity of the eigenvalue \(-1\) is exactly \(\sum_{i=1}^b n_i - b\), and  
$$
P_{\Delta}(x)=	(x+1)^{\sum_{i=1}^{b}n_i - b}  \left(x\prod_{i=1}^{b}(x+3n_i+1) -(4x+1)\sum_{i=1}^b n_i \prod_{j \ne i} (x+3n_i+1)\right).
$$
Analogous argument also gives $-1$ is an eigenvalue of  $\Delta_{22}$ with multiplicity $\ds \sum_{i=1}^b n_i -b,$ and 
$$P_{\Delta_{22}}(x)=	(x+1)^{\sum_{i=1}^{b}n_i - b}  \left(\prod_{i =1}^{b}(x+3n_i+1) - 4\sum_{i =1}^{b} n_i \prod_{j \ne i} (x+3n_j+1)\right).$$
\end{proof}

We now present the corollary stating the determinant of $\Delta$ and $\Delta_{22}$, which can be obtained by substituting $x=0$ into the characteristic polynomials $P_{\Delta}(x)$ and $P_{\Delta_{22}}(x)$.

\begin{cor}\label{cor:det}
Let $\Delta$ be the squared distance matrix of the starlike block graph $\mathcal{S}(n_1,n_2,\cdots,n_b)$ and $\Delta_{22}$ be the principal submatrix of $\Delta$ as defined in Eqn.~\eqref{eqn:Delta22}. then, the determinant of $\Delta$ and $\Delta_{22}$ are given by
	$$\det \Delta = (-1)^{\sum_{i=1}^b n_i} \sum_{i=1}^b n_i \prod_{j \ne i} (3n_i+1),$$
	and
		$$
	\det \Delta_{22} = (-1)^{\sum_{i =1}^{b} n_i -1} \left(4\sum_{i =1}^{b} n_i \prod_{j \ne i} (3n_j+1) - \prod_{i =1}^{b}(3n_i+1)\right).
		$$

\end{cor}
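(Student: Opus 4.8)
The plan is to read both determinants off directly from the characteristic polynomials established in Theorem~\ref{thm:char-poly}, using the elementary identity $\det A = (-1)^{m}\,P_A(0)$ valid for any $m\times m$ matrix $A$, which follows from $P_A(0)=\det(0\cdot I-A)=(-1)^{m}\det A$.

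First I would substitute $x=0$ into
$$P_{\Delta}(x)=(x+1)^{\sum_{i=1}^{b}n_i-b}\left(x\prod_{i=1}^{b}(x+3n_i+1)-(4x+1)\sum_{i=1}^{b}n_i\prod_{j\ne i}(x+3n_i+1)\right).$$
At $x=0$ the factor $(x+1)^{\sum n_i-b}$ equals $1$, the term $x\prod_i(x+3n_i+1)$ vanishes, and $(4x+1)$ becomes $1$, so $P_{\Delta}(0)=-\sum_{i=1}^{b}n_i\prod_{j\ne i}(3n_i+1)$. Since $\Delta$ has order $n=1+\sum_{i=1}^{b}n_i$, this gives $\det\Delta=(-1)^{n}P_{\Delta}(0)=(-1)^{1+\sum n_i}\bigl(-\sum_{i=1}^{b}n_i\prod_{j\ne i}(3n_i+1)\bigr)=(-1)^{\sum_{i=1}^{b}n_i}\sum_{i=1}^{b}n_i\prod_{j\ne i}(3n_i+1)$, which is the claimed expression.

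Next I would repeat the substitution for $\Delta_{22}$: putting $x=0$ in $P_{\Delta_{22}}(x)=(x+1)^{\sum n_i-b}\bigl(\prod_{i}(x+3n_i+1)-4\sum_i n_i\prod_{j\ne i}(x+3n_j+1)\bigr)$ yields $P_{\Delta_{22}}(0)=\prod_{i=1}^{b}(3n_i+1)-4\sum_{i=1}^{b}n_i\prod_{j\ne i}(3n_j+1)$. As $\Delta_{22}$ has order $\sum_{i=1}^{b}n_i$, multiplying by $(-1)^{\sum n_i}$ and pulling out a sign to match the stated form gives $\det\Delta_{22}=(-1)^{\sum n_i-1}\bigl(4\sum_{i=1}^{b}n_i\prod_{j\ne i}(3n_j+1)-\prod_{i=1}^{b}(3n_i+1)\bigr)$.

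There is no substantive obstacle here; the only point requiring care is the parity bookkeeping. The two matrices differ in order by one (the central cut vertex), so the sign prefactors $(-1)^{n}$ and $(-1)^{\sum n_i}$ differ accordingly, and one must also keep track of the extra $-1$ produced by the constant term $(4x+1)\big|_{x=0}=1$ appearing in $P_{\Delta}$ but not in $P_{\Delta_{22}}$. Once these signs are handled, both formulas are immediate consequences of Theorem~\ref{thm:char-poly}.
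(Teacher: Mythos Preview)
Your proposal is correct and is exactly the approach the paper takes: the corollary is stated as an immediate consequence of Theorem~\ref{thm:char-poly} obtained by substituting $x=0$ into $P_{\Delta}(x)$ and $P_{\Delta_{22}}(x)$. Your sign bookkeeping is right; the only cosmetic quibble is that the ``extra $-1$'' you mention comes from the minus sign in front of $(4x+1)$ in $P_\Delta$, not from the value $(4x+1)|_{x=0}$ itself.
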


\subsection{Cofactor of $\Delta (\mathcal{S}(n_1,n_2,\cdots,n_b))$}

In this section, we will compute the $\cof \Delta$ and $\cof \Delta_{22}.$ From Lemma~\ref{Lem:cof}, given a matrix $A$, the $\cof A= \det M_{A}(1|1)$, where $M_{A}(1|1)$ is the matrix obtained from $A$ by subtracting the first row from all other rows and then subtracting the first column from all other columns. Thus, from Eqn.~\eqref{eqn:Delta-starlike} and~\eqref{eqn:Delta22}, the block matrix form of $M_{\Delta}(1|1)$ and $M_{\Delta_{22}}(1|1)$ are given by
\begin{equation}\label{eqn:M_cof_Delta-1}
	M_{\Delta}(1|1) = \left[
	\begin{array}{c|c|c|c}
		-J_{n_1}-I_{n_1} & 2J_{n_1 \times n_2} & \cdots & 2J_{n_1 \times n_b}\\
		\midrule
		2J_{n_2 \times n_1} & -J_{n_2}-I_{n_2}  & \cdots & 2J_{n_2 \times n_b}\\
		\midrule
		\vdots&\vdots&\ddots&\vdots\\
		\midrule
		2J_{n_b \times n_1} & 2J_{n_b \times n_2}  & \cdots & -J_{n_b}-I_{n_b} \\
	\end{array} \right],
\end{equation}
and 
\begin{equation}\label{eqn:M_cof_Delta-22}
	M_{\Delta_{22}}(1|1) = \left[
	\begin{array}{c|c|c|c}
		-J_{n_1-1}-I_{n_1-1} & 2J_{n_1 \times n_2} & \cdots & 2J_{n_1 \times n_b}\\
		\midrule
		2J_{n_2 \times n_1} & -7J_{n_2}-I_{n_2}  & \cdots & 2J_{n_2 \times n_b}\\
		\midrule
		\vdots&\vdots&\ddots&\vdots\\
		\midrule
		2J_{n_b \times n_1} & 2J_{n_b \times n_2}  & \cdots & -7J_{n_b}-I_{n_b} \\
	\end{array} \right].
\end{equation}

The proof of the following lemma is similar to Lemma~\ref{lem:ev_Delta_minus1}, so we state the result without proof.

%We now state a lemma without proof as it is similar to the proof of Lemma~\ref{lem:ev_Delta_minus1}. 
\begin{lem}\label{lem:cof_ev_Delta_minus1}
Let $M_{\Delta}(1|1)$ and $M_{\Delta_{22}}(1|1)$ be the matrices defined in Eqns.~\eqref{eqn:M_cof_Delta-1}and~\eqref{eqn:M_cof_Delta-22}. Then, $-1$ is an eigenvalue of both the matrices $M_{\Delta}(1|1)$ and $M_{\Delta_{22}}(1|1)$ with multiplicity at least $\ds \sum_{i=1}^b n_i -b$ and  $\ds \sum_{i=1}^b n_i -(b+1),$ respectively.
\end{lem}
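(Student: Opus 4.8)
The plan is to mimic the argument of Lemma~\ref{lem:ev_Delta_minus1} essentially verbatim, exploiting the fact that in both $M_{\Delta}(1|1)$ and $M_{\Delta_{22}}(1|1)$ every \emph{off-diagonal} block is a scalar multiple of an all-ones matrix, and every \emph{diagonal} block has the form $aJ_m - I_m$ for a suitable scalar $a$ (namely $a=-1$ throughout $M_{\Delta}(1|1)$, and $a=-1$ on the first diagonal block, $a=-7$ on the remaining diagonal blocks of $M_{\Delta_{22}}(1|1)$, as displayed in Eqns.~\eqref{eqn:M_cof_Delta-1} and~\eqref{eqn:M_cof_Delta-22}). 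The key observation is that if $\mathbf{z}$ is a column vector whose support lies within a single block and whose coordinates sum to $0$, then $J\mathbf{z}=\mathbf{0}$; consequently $(aJ_m-I_m)\mathbf{z}=-\mathbf{z}$ for \emph{any} scalar $a$, while every off-diagonal block annihilates $\mathbf{z}$ since it is a multiple of $J$.

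First I would handle $M_{\Delta}(1|1)$. Using the same family $\mathcal{E}$ of difference vectors $\mathbf{e}(i,j)$ as in the proof of Lemma~\ref{lem:ev_Delta_minus1} — for each block index $1\le k\le b$ one takes $n_k-1$ vectors supported on $V_k$ with nonzero coordinates $+1$ and $-1$, so that $|\mathcal{E}|=\sum_{i=1}^b n_i-b$ — the observation above shows that each such vector is an eigenvector of $M_{\Delta}(1|1)$ for the eigenvalue $-1$. Since these vectors have pairwise disjoint supports they are linearly independent, giving the stated lower bound $\sum_{i=1}^b n_i-b$ on the multiplicity of $-1$.

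For $M_{\Delta_{22}}(1|1)$ the same recipe applies, but the first diagonal block now has order $n_1-1$ rather than $n_1$, so it supplies only $(n_1-1)-1=n_1-2$ independent difference vectors; the remaining blocks $V_i$ with $2\le i\le b$ still supply $n_i-1$ vectors each (the scalar $-7$ on their diagonal blocks being irrelevant by the same computation). Summing, $(n_1-2)+\sum_{i=2}^b(n_i-1)=\sum_{i=1}^b n_i-(b+1)$ linearly independent eigenvectors for the eigenvalue $-1$, which is the asserted bound.

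There is essentially no hard step: the argument is routine once one notices the uniform ``$aJ-I$ on the diagonal, multiple of $J$ off the diagonal'' block structure. The only point requiring a moment's care is the bookkeeping in the second case — one must remember that $M_{\Delta_{22}}(1|1)$ is obtained by deleting one further vertex from the first block, which is precisely what lowers the count from $\sum_{i=1}^b n_i-b$ to $\sum_{i=1}^b n_i-(b+1)$ — and, as in the remark following Lemma~\ref{lem:ev_Delta_minus1}, a block $K_2$ (or the first block being $K_2$) simply contributes an empty family, so the bound remains valid in those degenerate cases.
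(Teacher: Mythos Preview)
Your proposal is correct and follows exactly the route the paper intends: the paper omits the proof, stating only that it is similar to Lemma~\ref{lem:ev_Delta_minus1}, and your argument is precisely that adaptation. One small wording slip: the vectors $\mathbf{e}(i,j)$ within a single block do not have pairwise disjoint supports (they share the anchor coordinate), but their linear independence is of course immediate anyway.
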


We will now calculate the cofactors \(\cof \Delta\) and \(\cof \Delta_{22}\) in the following result.

\begin{theorem}\label{thm:cof}
Let $\Delta$ be the squared distance matrix of the starlike block graph $\mathcal{S}(n_1,n_2,\cdots,n_b)$ and $\Delta_{22}$ be the principal submatrix of $\Delta$ as defined in Eqn.~\eqref{eqn:Delta22}.  Then, $\cof \Delta$ and $\cof \Delta_{22}$ are given by
\begin{equation*}\label{eqn:cof_Delta}
\cof \Delta = (-1)^{\sum_{i=1}^{b}n_i }\left[\prod_{i=1}^b(3n_i+1) -2\sum_{i=1}^b n_i \prod_{j \ne i}(3n_j+1)\right],
\end{equation*}
and 
$$
	\cof \Delta_{22} = (-1)^{\sum_{i =1}^b n_i - 1} \left(\sum_{i =1 }^b n_i \prod_{j \ne i} (3n_j+1) \right).
	$$
\end{theorem}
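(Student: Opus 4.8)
The plan is to apply Lemma~\ref{Lem:cof}, which turns the two cofactor sums into the determinants $\cof\Delta=\det M_{\Delta}(1|1)$ and $\cof\Delta_{22}=\det M_{\Delta_{22}}(1|1)$, whose block forms are already recorded in Eqns.~\eqref{eqn:M_cof_Delta-1} and~\eqref{eqn:M_cof_Delta-22}. Both of these matrices are real symmetric and carry an equitable partition: for $M_{\Delta}(1|1)$ it is the partition into $V_1,\dots,V_b$, and for $M_{\Delta_{22}}(1|1)$ it is the partition into $V_1\setminus\{v\},V_2,\dots,V_b$ (where $v$ is the deleted vertex). So the first step is to write down the two associated $b\times b$ quotient matrices $\mathbf{Q}(M_{\Delta}(1|1))$ and $\mathbf{Q}(M_{\Delta_{22}}(1|1))$ directly from those block forms.

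The second step is to transfer the spectrum. I would reuse, essentially verbatim, the argument of Lemma~\ref{lem:ev_Delta_minus1} and Theorem~\ref{thm:char-poly}: on the orthogonal complement of the all-ones vector inside each diagonal block, every off-diagonal $J$-block acts as the zero map, so that subspace is exactly the $(-1)$-eigenspace counted in Lemma~\ref{lem:cof_ev_Delta_minus1}; together with Proposition~\ref{prop:quotient} this yields the factorisations $P_{M_{\Delta}(1|1)}(x)=(x+1)^{\sum n_i-b}\,P_{\mathbf{Q}(M_{\Delta}(1|1))}(x)$ and $P_{M_{\Delta_{22}}(1|1)}(x)=(x+1)^{\sum n_i-(b+1)}\,P_{\mathbf{Q}(M_{\Delta_{22}}(1|1))}(x)$. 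Evaluating at $x=0$ then gives $\det M_{\Delta}(1|1)=(-1)^{\sum n_i-b}\det\mathbf{Q}(M_{\Delta}(1|1))$ and $\det M_{\Delta_{22}}(1|1)=(-1)^{\sum n_i-(b+1)}\det\mathbf{Q}(M_{\Delta_{22}}(1|1))$, so everything reduces to two explicit $b\times b$ determinants.

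The third step is the computation of those determinants. For $\mathbf{Q}(M_{\Delta}(1|1))$ this is clean: reading off Eqn.~\eqref{eqn:M_cof_Delta-1}, it equals $2\,\mathds{1}_b\,\mathbf{m}^t-\operatorname{diag}(3n_1+1,\dots,3n_b+1)$ with $\mathbf{m}=(n_1,\dots,n_b)^t$, so the rank-one update formula for determinants gives $(-1)^b\big(\prod_{i}(3n_i+1)-2\sum_i n_i\prod_{j\ne i}(3n_j+1)\big)$; multiplying by the sign $(-1)^{\sum n_i-b}$ from the previous step produces the stated formula for $\cof\Delta$. The matrix $\mathbf{Q}(M_{\Delta_{22}}(1|1))$ is slightly less uniform — the row coming from the block of size $n_1-1$ differs from the others — so I would either clear that row by one elementary operation before applying the rank-one formula, or, more cheaply, bypass it entirely: from the block form $\Delta=\left[\begin{smallmatrix}0&\mathds{1}^t\\\mathds{1}&\Delta_{22}\end{smallmatrix}\right]$ and cofactor expansion along the first row and column one gets the identity $\det\Delta=-\,\mathds{1}^t(\textup{Adj}\,\Delta_{22})\,\mathds{1}=-\cof\Delta_{22}$, valid for any $\Delta_{22}$, so $\cof\Delta_{22}=-\det\Delta$ and Corollary~\ref{cor:det} finishes it at once.

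The only genuine obstacles are bookkeeping ones. First, Lemma~\ref{lem:cof_ev_Delta_minus1} only gives a lower bound on the multiplicity of $-1$, so to pin the sign exactly one must check that $-1$ is not an eigenvalue of either quotient matrix (equivalently $P_{\mathbf{Q}}(-1)\ne 0$), which is a short computation; using the invariant-subspace factorisation as above is the cleanest way to make the multiplicity exact automatically. Second, when $n_1=1$ the block $V_1\setminus\{v\}$ is empty and $\mathbf{Q}(M_{\Delta_{22}}(1|1))$ degenerates, but the remark following Lemma~\ref{lem:ev_Delta_minus1} already notes that the relevant statements are insensitive to block sizes, and the Schur-complement route for $\cof\Delta_{22}$ avoids the degenerate case altogether.
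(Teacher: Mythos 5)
Your proposal is correct, and for $\cof\Delta$ it is essentially the paper's argument: apply Lemma~\ref{Lem:cof}, pass to the equitable quotient of $M_{\Delta}(1|1)$, pin the multiplicity of the eigenvalue $-1$ at exactly $\sum_i n_i-b$, and evaluate the resulting $b\times b$ determinant via a rank-one update, giving $(-1)^{\sum_i n_i-b}\cdot(-1)^b\bigl[\prod_i(3n_i+1)-2\sum_i n_i\prod_{j\ne i}(3n_j+1)\bigr]$. Two points where you genuinely diverge, both to your advantage. First, you make the multiplicity exact by the invariant-subspace decomposition (vectors summing to zero on each cell form a $(-1)$-eigenspace complementary to the column space of the characteristic matrix $S$), which yields the clean factorisation $P_{M_{\Delta}(1|1)}(x)=(x+1)^{\sum_i n_i-b}P_{\mathbf{Q}(M_{\Delta}(1|1))}(x)$ with no side condition; the paper instead argues that $-1$ is not an eigenvalue of the quotient, and its stated justification (``$\mathbf{Q}\mathbf{x}=0$ implies $\mathbf{x}=0$'') is really a nonsingularity check and is worded as the wrong one, so your route is tidier. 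Second, for $\cof\Delta_{22}$ the paper repeats the whole quotient computation on $M_{\Delta_{22}}(1|1)$, whereas you use the bordered-determinant identity $\det\begin{bmatrix}0&\mathds{1}^t\\ \mathds{1}&\Delta_{22}\end{bmatrix}=-\mathds{1}^t\,\textup{Adj}(\Delta_{22})\,\mathds{1}=-\cof\Delta_{22}$ together with Corollary~\ref{cor:det}; this is shorter, explains structurally why $\cof\Delta_{22}=-\det\Delta$, sidesteps the degenerate case $n_1=1$ (where the cell $V_1\setminus\{v\}$ is empty and the $b\times b$ quotient collapses, a case the paper passes over silently), and avoids having to write down $M_{\Delta_{22}}(1|1)$ and its quotient at all — which is worth something, since the paper's displayed versions of those intermediate matrices contain sign/coefficient slips even though its final formula is correct. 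If you do keep the quotient route for $\cof\Delta_{22}$ as your primary argument rather than the Schur-complement shortcut, just remember to treat $n_1=1$ separately, since there the exponent of $(x+1)$ is $\sum_i n_i-b$, not $\sum_i n_i-(b+1)$.
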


\begin{proof}

The quotient matrix of $M_{\Delta}(1|1)$ as defined in Eqn.~\eqref{eqn:M_cof_Delta-1} is:
\begin{equation*}\label{eqn:Q(M)}
\mathbf{Q}(M_{\Delta}(1|1)) = \left[
\begin{array}{cccc}
	-n_1-1 & 2n_1 &\cdots& 2n_1\\
	2n_2 & -n_2-1  &\cdots& 2n_2\\
	\vdots&\vdots&\ddots&\vdots\\
	2n_b & 2n_b &\cdots & -n_b-1 \\
\end{array}\right].
\end{equation*}
It is straightforward to observe that if \( \mathbf{Q}(M_{\Delta}(1|1)) \mathbf{x} = 0 \), then \( \mathbf{x} = 0 \). Therefore, \( -1 \) is not an eigenvalue of \( \mathbf{Q}(M_{\Delta}(1|1)) \). By Theorem \ref{prop:quotient}, it follows that \( \sigma(\mathbf{Q}(M_{\Delta}(1|1))) \subset \sigma(M_{\Delta}(1|1)) \). Consequently, \( M_{\Delta}(1|1) \) has \( b \) eigenvalues other than \( -1 \), and the multiplicity of \( -1 \) is given by \( \sum_{i=1}^b n_i - b \). Thus,  $$\cof \Delta= (-1)^{ \sum_{i=1}^b n_i -b} \det \mathbf{Q}(M_{\Delta}(1|1)).$$

Next, we apply elementary column operations \( C_i \leftarrow C_i - C_1 \) for all \( 2 \leq i \leq b \) on \( \mathbf{Q}(M_{\Delta}(1|1)) \), resulting in: 
 $$\left[
	\begin{array}{cccc}
		-n_1-1 & 3n_1+1 &\cdots& 3n_1+1\\
		2n_2 & -3n_2-1  &\cdots& 0\\
		\vdots&\vdots&\ddots&\vdots\\
		2n_b & 0 &\cdots & -3n_b-1 \\
	\end{array}\right]$$ and by expanding along the first row, we have
	$$\det \mathbf{Q}(M(1|1)) = (-1)^b \left[\prod_{i=1}^b(3n_i+1) -2\sum_{i=1}^b n_i \prod_{j \ne i}(3n_j+1)\right].$$
This completes the proof for $\cof \Delta.$
	
Now, we consider the quotient matrix of \( M_{\Delta_{22}}(1|1) \) as defined in Eqn.~\eqref{eqn:M_cof_Delta-22}, given by
\begin{equation*}\label{eqn:Q(M)}
\mathbf{Q}(M_{\Delta_{22}}(1|1)) = \left[
\begin{array}{cccc}
	-n_1 & -n_2 & \cdots & n_b\\
	1-n_1 & -7n_2-1  &\cdots& -n_b\\
	\vdots&\vdots&\ddots&\vdots\\
	1-n_1 & -n_2 &\cdots & -7n_b-1 \\
\end{array}\right].
\end{equation*}

We perform elementary row operations \( R_i \leftarrow R_i - 4R_1 \) for all \( 2 \leq i \leq b \), leading to:
 $$\left[
	\begin{array}{cccc}
		-n_1 & -n_2 &\cdots& n_b\\
		1+3n_1 & -3n_2-1  &\cdots& 0\\
		\vdots&\vdots&\ddots&\vdots\\
		1+3n_b & 0 &\cdots & -3n_b-1 \\
	\end{array}\right]$$ 

Then, expanding along the first row yields:
\[
\det \mathbf{Q}(M_{\Delta_{22}}(1|1)) = (-1)^b \left(\sum_{i = 1}^b n_i \prod_{j \ne i} (3n_j + 1) \right).
\]

Therefore, using arguments similar to those used in computing the \( \cof \Delta \), the desired  result for \( \cof \Delta_{22} \) follows.
\end{proof}

The following corollary provides the necessary and sufficient conditions for a starlike block graph where $\cof \Delta=0$.

\begin{cor}\label{cor:cof}
Let $G=\mathcal{S}(n_1,n_2,\cdots,n_b)$ and $\Delta(G)$ be the squared distance matrix of $G$. Then,  $\cof \Delta(G)=0$ if and only if $G=\mathcal{S}(1,1)$, a path of length $2.$
\end{cor}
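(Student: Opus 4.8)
The starting point is Theorem~\ref{thm:cof}, which already gives a closed form for $\cof\Delta$. Since the sign factor $(-1)^{\sum_{i=1}^b n_i}$ is nonzero, the vanishing $\cof\Delta(G)=0$ is equivalent to
$$\prod_{i=1}^b(3n_i+1) = 2\sum_{i=1}^b n_i \prod_{j\ne i}(3n_j+1).$$
The plan is to divide both sides by the positive quantity $\prod_{i=1}^b(3n_i+1)$, which turns this into the single scalar equation
$$\sum_{i=1}^b \frac{2n_i}{3n_i+1} = 1.$$
So the whole problem reduces to deciding for which tuples $(n_1,\ldots,n_b)$ of positive integers this identity can hold.

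Next I would analyse the individual summands. For $t\ge 1$ the function $t\mapsto \frac{2t}{3t+1}$ is strictly increasing, takes the value $\tfrac12$ at $t=1$, and has supremum $\tfrac23$; hence $\tfrac12 \le \frac{2n_i}{3n_i+1} < \tfrac23$ for every admissible $n_i$, with the left equality iff $n_i=1$. Since $\mathcal{S}(n_1,\ldots,n_b)$ is required to have a (central) cut-vertex, removing that vertex must disconnect the graph, so $b\ge 2$. Summing the lower bounds gives
$$\sum_{i=1}^b \frac{2n_i}{3n_i+1} \ge \frac{b}{2} \ge 1,$$
where the first inequality is an equality exactly when every $n_i=1$ and the second exactly when $b=2$. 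Therefore the reduced equation forces $b=2$ and $n_1=n_2=1$, i.e.\ $G=\mathcal{S}(1,1)$; conversely, plugging $b=2$, $n_1=n_2=1$ into the formula of Theorem~\ref{thm:cof} gives $\cof\Delta=0$. Finally, I would observe that $\mathcal{S}(1,1)$ is two copies of $K_2$ sharing a vertex, i.e.\ the path $P_3$ of length $2$, completing the characterisation.

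There is essentially no hard step here: the argument is a one-line algebraic reduction followed by elementary monotonicity estimates. The only point requiring a little care is to handle all $b\ge 2$ uniformly rather than by an unbounded case split — this is exactly what the uniform lower bound $\frac{2n_i}{3n_i+1}\ge\frac12$ accomplishes, since it immediately kills every $b\ge 3$ and pins down the unique solution when $b=2$. One should also make explicit the (easy) fact that the definition of a starlike block graph already guarantees $b\ge 2$, so that the case $b=1$ never arises.
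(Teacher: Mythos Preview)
Your proof is correct and follows essentially the same approach as the paper: both reduce via Theorem~\ref{thm:cof} to the identity $\sum_{i=1}^b \frac{n_i}{3n_i+1}=\frac12$, then use the elementary lower bound $\frac{n_i}{3n_i+1}\ge\frac14$ (your $\frac{2n_i}{3n_i+1}\ge\frac12$) to force $b=2$ and $n_1=n_2=1$. Your version is a bit more careful in making $b\ge2$ and the converse explicit, but there is no substantive difference.
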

\begin{proof}
Let $G=\mathcal{S}(n_1,n_2,\cdots,n_b)$ and $\cof \Delta(G)=0$. By Theorem~\ref{thm:cof}, we get 
$$\ds \prod_{i=1}^b(3n_i+1) = 2\sum_{i=1}^b n_i \prod_{j \ne i}(3n_j+1)$$ which implies that $\ds \sum_{i=1}^b \frac{n_i}{3n_i+1} = \frac{1}{2}.$ Since $n_i \geq 1$, we have $ \dfrac{n_i}{3n_i+1}\geq \dfrac{1}{4}$. Therefore, $\ds \sum_{i=1}^b \frac{n_i}{3n_i+1} = \frac{1}{2}$ if and only if $b=2$ and  $n_1=n_2=1$. Hence, the desired result follows.
\end{proof}

\subsection{Inertia of $\Delta(\mathcal{S}(n_1,n_2,\cdots,n_b))$}

In this section, we first determine the inertia of $\Delta_{22}$, and subsequently use this result to determine the inertia of $\Delta$, the squared distance matrix of the starlike block graph $\mathcal{S}(n_1,n_2,\cdots,n_b)$.  The lemma below computes the inertia of $\Delta_{22}$.

\begin{lem}\label{lem:Q(Delta_22)}
Let $\Delta$ be the squared distance matrix of the starlike block graph $\mathcal{S}(n_1,n_2,\cdots,n_b)$ on  $n = 1+ \sum_{i=1}^b n_i$ vertices and $\Delta_{22}$ be the principal submatrix of $\Delta$ as defined in Eqn.~\eqref{eqn:Delta22}. If $\mathbf{Q}(\Delta_{22})$ is the quotient matrix of $\Delta_{22}$ as defined in Eqn~\eqref{eqn:Q_Delta},then, the following result holds:
 
\begin{enumerate}
\item [($i$)]  $\rho(\mathbf{Q}(\Delta_{22}))$, the spectral radius of $\mathbf{Q}(\Delta_{22})$ is the only positive eigenvalue of  $\mathbf{Q}(\Delta_{22})$. 

\item [($ii$)] $\Delta_{22}$ is a nonsingular matrix. 

\item [($iii$)] The inertia of $\Delta_{22}$, $\textup{In}(\Delta_{22})=(1, 0, n-2).$
\end{enumerate}

\end{lem}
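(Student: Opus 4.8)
The plan is to prove the three parts in the order stated, using each as a stepping stone to the next.

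\textbf{Part ($i$).} The key observation is that $\mathbf{Q}(\Delta_{22})$ can be written as a rank-one perturbation of a diagonal matrix with negative entries, so that Proposition~\ref{prop:rank-one} applies. Indeed, from Eqn.~\eqref{eqn:Q_Delta},
\[
\mathbf{Q}(\Delta_{22}) = \operatorname{diag}(-3n_1-1, -3n_2-1, \ldots, -3n_b-1) + \mathbf{1}_b(4n_1, 4n_2, \ldots, 4n_b) - \operatorname{diag}(\text{something}),
\]
but one must be slightly careful: the diagonal of $\mathbf{Q}(\Delta_{22})$ is $n_i - 1$, not $4n_i$, so the clean decomposition is $\mathbf{Q}(\Delta_{22}) = A + \mathbf{u}\mathbf{v}^t$ with $A = \operatorname{diag}(-3n_i-1)$, $\mathbf{u} = \mathbf{1}_b$, and $\mathbf{v} = (4n_1, \ldots, 4n_b)^t$. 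Since $A$ is diagonal with strictly negative entries and $\mathbf{Q}(\Delta_{22})$ is entrywise positive when every $n_i \geq 2$, Proposition~\ref{prop:rank-one} gives that $\rho(\mathbf{Q}(\Delta_{22}))$ is the only positive eigenvalue. When some block is $K_2$ (i.e.\ $n_i = 1$), $\mathbf{Q}(\Delta_{22})$ may have a zero on the diagonal and not be strictly positive; one handles this either by noting the argument in the proof of Proposition~\ref{prop:rank-one} only needs the diagonal similarity and Weyl's inequality (which still forces at least $b-1$ negative eigenvalues), together with the fact that $\mathbf{Q}(\Delta_{22})$ is irreducible nonnegative, hence has its spectral radius as a simple eigenvalue, or by a direct continuity/limiting argument. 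I expect this edge case to be the main obstacle, and I would resolve it by re-examining the proof of Proposition~\ref{prop:rank-one}: the similarity $T = \operatorname{diag}(\mathbf{y})^{-1}\mathbf{Q}(\Delta_{22})\operatorname{diag}(\mathbf{y}) = A + \mathbf{x}\mathbf{x}^t$ still holds verbatim, Weyl gives $\geq b-1$ negative eigenvalues of $T$ hence of $\mathbf{Q}(\Delta_{22})$, and irreducibility of the nonnegative matrix $\mathbf{Q}(\Delta_{22})$ (its underlying digraph is strongly connected since $n_i\ge 1$) gives the Perron value as a positive simple eigenvalue; combined, exactly one positive eigenvalue.

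\textbf{Part ($ii$).} By Corollary~\ref{cor:det}, $\det \Delta_{22} = (-1)^{\sum n_i - 1}\bigl(4\sum_{i} n_i \prod_{j\ne i}(3n_j+1) - \prod_i (3n_i+1)\bigr)$. So nonsingularity is equivalent to $4\sum_i \frac{n_i}{3n_i+1} \ne 1$. Since each $n_i \geq 1$ forces $\frac{n_i}{3n_i+1} \geq \frac14$, we get $4\sum_i \frac{n_i}{3n_i+1} \geq b \geq 1$, with equality to $1$ only if $b=1$ and $n_1=1$; but $\mathcal{S}(1)$ with a ``central cut vertex'' is degenerate (a single block $K_2$ has no cut vertex), so this case does not arise, or if one allows $b=1$ then $\frac{n_1}{3n_1+1} = \frac14$ needs $n_1=1$ which still gives value exactly $1$ — I would instead argue that for $b\ge 2$ the sum is $\geq \frac{b}{4}\cdot 4 = b \geq 2 > 1$, and treat $b=1$ (if permitted) separately by direct computation on $J_{n_1}-I_{n_1}$, which is nonsingular iff $n_1 \ne 1$. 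Cleaner: simply note $4\sum_i \frac{n_i}{3n_i+1} > 4 \cdot \frac{1}{4} = 1$ whenever $b \geq 2$, since then the sum has at least two terms each $\geq \frac14$. Hence $\det \Delta_{22} \ne 0$.

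\textbf{Part ($iii$).} Now combine. By Theorem~\ref{thm:char-poly} and Theorem~\ref{prop:quotient}(i), $\sigma(\Delta_{22})$ consists of $-1$ with multiplicity $\sum_i n_i - b$ (which is negative, contributing that many negative eigenvalues since these eigenvectors from $\mathcal{E}$ are genuine) together with the $b$ eigenvalues of $\mathbf{Q}(\Delta_{22})$. From part ($i$), among those $b$ eigenvalues exactly one is positive and, since $\Delta_{22}$ is nonsingular by part ($ii$), none of the remaining $b-1$ quotient eigenvalues is zero — so they are all negative. Therefore $\mathbf{n}_+(\Delta_{22}) = 1$, $\mathbf{n}_0(\Delta_{22}) = 0$, and $\mathbf{n}_-(\Delta_{22}) = (\sum_i n_i - b) + (b-1) = \sum_i n_i - 1 = n - 2$. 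This gives $\textup{In}(\Delta_{22}) = (1, 0, n-2)$. The only subtlety is confirming that the multiplicity count is exact and that $-1$ is not also a quotient eigenvalue — this is exactly what Lemma~\ref{lem:char-Q-Delta} plus the check $P_{\mathbf{Q}(\Delta_{22})}(-1)\ne 0$ (equivalently, $\mathbf{Q}(\Delta_{22})$ acting on $\mathbf{1}$-type vectors, as already noted in the proof of Theorem~\ref{thm:char-poly}) provides.
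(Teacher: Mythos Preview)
Your proposal is correct and follows essentially the same route as the paper: the same rank-one decomposition $\mathbf{Q}(\Delta_{22})=\operatorname{diag}(-3n_i-1)+\mathds{1}_b(4n_1,\ldots,4n_b)$ feeding into Proposition~\ref{prop:rank-one} for part~($i$), the determinant formula from Corollary~\ref{cor:det} for part~($ii$), and the combination of the $-1$--eigenspace with the quotient spectrum for part~($iii$). Your algebraic check for~($ii$) via $4\sum_i \frac{n_i}{3n_i+1}\ge b\ge 2$ is a minor variant of the paper's rearrangement $4\beta-\alpha=(n_1-1)\alpha_{\widehat{n_1}}+4\sum_{i\ge 2}n_i\alpha_{\widehat{n_i}}>0$, and you are in fact more careful than the paper in flagging that Proposition~\ref{prop:rank-one} literally requires an entrywise \emph{positive} matrix, which fails on the diagonal when some $n_i=1$; your fix via irreducibility plus the Weyl argument inside the proof of Proposition~\ref{prop:rank-one} is sound.
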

\begin{proof}
The matrix $\mathbf{Q}(\Delta_{22})$ can be expressed as:
	\begin{equation}
		\mathbf{Q}(\Delta_{22}) =  \left[
		\begin{array}{cccc}
			-3n_1-1 & 0 & \cdots & 0\\
			0 & -3n_2-1 & \cdots & 0\\
			\vdots&\vdots&\ddots&\vdots\\
			0 & 0 & \cdots & -3n_b-1
		\end{array} \right] + \left[
		\begin{array}{cccc}
			4n_1 & 4n_2 & \cdots & 4n_b\\
			4n_1 & 4n_2 & \cdots & 4n_b\\
			\vdots&\vdots&\ddots&\vdots\\
			4n_1 & 4n_2 & \cdots & 4n_b
		\end{array} \right].
	\end{equation} 
That is, $\mathbf{Q}(\Delta_{22})$ is a rank-one perturbation of a diagonal matrix with negative entries. Therefore, the part~$(i)$ follows from Proposition~\ref{prop:rank-one}.

Next, observe that 
$$4\sum_{i =1}^{b} n_i \prod_{j \ne i} (3n_j+1) - \prod_{i =1}^{b}(3n_i+1)=  (n_1-1) \prod_{j = 2}^b (3n_j+1) + 4\sum_{i =2}^{b} n_i \prod_{j \ne i} (3n_j+1),$$ 
and hence by Corollary~\ref{cor:det}, we have $\det \Delta_{22} \neq 0$. This proves part~$(ii)$.

We now proceed to prove the part~$(iii)$. From the proof of Theorem~\ref{thm:char-poly}, it is clear that $-1$ is an eigenvalue of $\Delta_{22}$ with multiplicity $(n-1)-b=\sum_{i=1}^b n_i -b$, and the other $b$ eigenvalues are the eigenvalues of $\mathbf{Q}(\Delta_{22})$. Using part~$(i)$ and~$(ii)$, $\mathbf{Q}(\Delta_{22})$ has exactly one positive eigenvalue and $b-1$ negative eigenvalue. Therefore, $\textup{In}(\Delta_{22})=(1, 0, n-2)$. 
\end{proof}

We conclude this section with the theorem that computes the inertia of $\Delta$.
\begin{theorem}\label{thm:in_sq}
Let $\Delta$ be the squared distance matrix of the starlike block graph $\mathcal{S}(n_1,n_2,\cdots,n_b)$ on  $n = 1+ \sum_{i=1}^b n_i$ vertices.  Then, the inertia of  $\Delta$ is given by
	$\textup{In} (\Delta) = (1,0,n-1).$
\end{theorem}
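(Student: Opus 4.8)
The natural tool here is the Haynsworth inertia additivity formula (Proposition~\ref{prop:inertia}), applied to the block partition $\Delta = \begin{bmatrix} 0 & \mathds{1}_{n-1}^t \\ \mathds{1}_{n-1} & \Delta_{22} \end{bmatrix}$ from Eqn.~\eqref{eqn:Delta22}. Since $\Delta_{22}$ is nonsingular by Lemma~\ref{lem:Q(Delta_22)}($ii$), the hypothesis of Proposition~\ref{prop:inertia} is met with $A_{22} = \Delta_{22}$, and the formula gives
$$\textup{In}(\Delta) = \textup{In}(\Delta_{22}) + \textup{In}\!\left(-\,\mathds{1}_{n-1}^t\,\Delta_{22}^{-1}\,\mathds{1}_{n-1}\right).$$
By Lemma~\ref{lem:Q(Delta_22)}($iii$), $\textup{In}(\Delta_{22}) = (1,0,n-2)$, so the whole problem reduces to determining the sign of the scalar Schur complement $s := -\,\mathds{1}_{n-1}^t\,\Delta_{22}^{-1}\,\mathds{1}_{n-1}$.

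To pin down the sign of $s$, I would use the Schur complement determinant identity $\det\Delta = (\det\Delta_{22})\,s$, which expresses $s = \det\Delta / \det\Delta_{22}$; both determinants are already computed in Corollary~\ref{cor:det}. From that corollary, $\det\Delta = (-1)^{\sum_{i=1}^b n_i}\sum_{i=1}^b n_i\prod_{j\ne i}(3n_j+1)$, and since each $n_i \geq 1$ the summation is strictly positive, so $\det\Delta \neq 0$ and $\operatorname{sign}(\det\Delta) = (-1)^{\sum n_i}$. Similarly $\det\Delta_{22} = (-1)^{\sum n_i - 1}\bigl(4\sum n_i\prod_{j\ne i}(3n_j+1) - \prod_i(3n_i+1)\bigr)$, and the rearrangement
$$4\sum_{i=1}^b n_i\prod_{j\ne i}(3n_j+1) - \prod_{i=1}^b(3n_i+1) = (n_1-1)\prod_{j=2}^b(3n_j+1) + 4\sum_{i=2}^b n_i\prod_{j\ne i}(3n_j+1)$$
already recorded in the proof of Lemma~\ref{lem:Q(Delta_22)}($ii$) shows (using $b\geq 2$, so the last sum is nonempty) that this quantity is positive; hence $\operatorname{sign}(\det\Delta_{22}) = (-1)^{\sum n_i - 1}$. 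Therefore $s$ is a nonzero scalar with $\operatorname{sign}(s) = (-1)^{\sum n_i}\cdot(-1)^{\sum n_i - 1} = -1$, i.e. $s < 0$, so $\textup{In}(s) = (0,0,1)$. Adding the two pieces yields $\textup{In}(\Delta) = (1,0,n-2) + (0,0,1) = (1,0,n-1)$.

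There is no substantial obstacle once Lemma~\ref{lem:Q(Delta_22)} and Corollary~\ref{cor:det} are available; the computation is essentially sign bookkeeping. The one point that deserves care is verifying that the Schur complement scalar $s$ is genuinely nonzero, so that it contributes nothing to $\mathbf{n}_0(\Delta)$ — this is exactly the observation that $\det\Delta\neq 0$. As an alternative that avoids Haynsworth, one could combine Cauchy interlacing of the principal submatrix $\Delta_{22}$ inside $\Delta$ (which, together with $\textup{In}(\Delta_{22})=(1,0,n-2)$, forces $\mathbf{n}_{+}(\Delta)\in\{1,2\}$ and $\mathbf{n}_{0}(\Delta)=0$) with the sign of $\det\Delta$ from Corollary~\ref{cor:det} to rule out $\mathbf{n}_{+}(\Delta)=2$; but the Haynsworth route is the cleanest given the machinery already set up.
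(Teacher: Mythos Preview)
Your proposal is correct and follows essentially the same route as the paper: both apply the Haynsworth formula (Proposition~\ref{prop:inertia}) to the block partition with $A_{22}=\Delta_{22}$, invoke Lemma~\ref{lem:Q(Delta_22)} for the nonsingularity and inertia of $\Delta_{22}$, and then determine the sign of the scalar Schur complement $s=-\mathds{1}^t\Delta_{22}^{-1}\mathds{1}$. The only difference is in this last step: the paper rewrites $s=-\dfrac{\cof\Delta_{22}}{\det\Delta_{22}}$ and appeals to Corollary~\ref{cor:det} together with Theorem~\ref{thm:cof}, whereas you use the Schur determinant identity $\det\Delta=(\det\Delta_{22})\,s$ and read off the sign from Corollary~\ref{cor:det} alone. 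Your variant is slightly more economical since it bypasses the cofactor computation of Theorem~\ref{thm:cof}, but the two arguments are otherwise identical in structure.
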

\begin{proof}
Recall, the block matrix representation of the squared distance matrix $\Delta= 
	\begin{bmatrix}
		\Delta_{11} & \Delta_{12}\\
		\Delta_{21} & \Delta_{22}
	\end{bmatrix},
		$
		where $\Delta_{11}=0$, $\Delta_{12}^t=\Delta_{21}=\mathds{1}$ and $\Delta_{22}$ as defined in 	Eqn.\eqref{eqn:Delta22}. In view of part~$(ii)$ of Lemma~\ref{lem:Q(Delta_22)} and Proposition~\ref{prop:inertia}, we get
$$\textup{In}(\Delta) = \textup{In}(\Delta_{22}) + \textup{In}(\Delta_{11}-\Delta_{12}\Delta_{22}^{-1}\Delta_{21}).$$		
Using $\Delta_{11}=0$ and $\Delta_{12}^t=\Delta_{21}=\mathds{1}$, we have
 $\Delta_{11}-\Delta_{12}\Delta_{22}^{-1}\Delta_{21}= -\mathds{1}^t \Delta_{22}^{-1}\mathds{1}= - \dfrac{\cof \Delta_{22}}{\det \Delta_{22}}.$	From Corollary~\ref{cor:det} and Theorem~\ref{thm:cof}, we get $ \dfrac{\cof \Delta_{22}}{\det \Delta_{22}} >0$, and hence $\textup{In}(- \dfrac{\cof \Delta_{22}}{\det \Delta_{22}})= (0,0,1)$. Therefore, using part~$(iii)$ of Lemma~\ref{lem:Q(Delta_22)}, we have  
  $$\textup{In}(\Delta) = \textup{In}(\Delta_{22}) + \textup{In}( - \dfrac{\cof \Delta_{22}}{\det \Delta_{22}})=(1,0,n-2)+(0,0,1)=(1,0,n-1).$$
This concludes the proof.
\end{proof}

\section{Inverse of $\Delta (\mathcal{S}(n_1,n_2,\cdots,n_b))$}\label{sec:Inverse_Delta}

In this section, we will find the inverse of the matrix \(\Delta = \Delta(\mathcal{S}(n_1, n_2, \ldots, n_b))\) as a rank-one perturbation of a Laplacian-like matrix. According to Corollary~\ref{cor:det}, we have \(\det \Delta \neq 0\). Furthermore, referring to ~\cite[Lemma~$4.13$]{JD1}, the desired representation of the inverse \(\Delta^{-1}\) can be achieved only if \(\cof \Delta \neq 0\). Thus, based on Lemma~\ref{cor:cof}, the findings in this section hold  for all starlike block graphs except for \(\mathcal{S}(1,1)\), which is a path of length 2.

We now introduce a few notations that facilitate us in computing the inverse of $\Delta$. Let $n_i \in \mathbb{N}$ for $1\leq i\leq b$ and $b\geq 2$, let us denote
\begin{equation}\label{eqn:alpha}
\begin{cases}
\alpha=\alpha_{n_1,n_2,\cdots,n_b} = \ds \prod_{k=1}^b (3n_k+1),\\

\alpha_{\widehat{n_i}} =\ds \prod_{k=1\atop k \neq i}^b (3n_k+1) \text{ and } \alpha_{\widehat{n_in_j}} =\ds \prod_{k=1\atop k \neq i,j}^b (3n_k+1).\\
\end{cases}
\end{equation}
and
\begin{equation}\label{eqn:beta}
\quad \ 
\begin{cases}
\beta=\beta_{n_1,n_2,\cdots,n_b} = \ds \sum_{k=1}^b n_k \prod_{j=1 \atop j\neq k}^b (3n_j+1)=  \sum_{k=1}^b n_k  \alpha_{\widehat{n_k}},\\
\beta_{\widehat{n_i}} =\ds \sum_{k=1\atop k\neq i}^b n_k \prod_{j=1 \atop k \neq i, j}^b (3n_j+1)=  \sum_{k=1 \atop k\neq i}^b n_k  \alpha_{\widehat{n_i n_k}}.
\end{cases}
\end{equation}
Let $\mathcal{S}(n_1,n_2,\cdots,n_b)$ be the starlike block graph on $n$ vertices with blocks $K_{n_1+1},K_{n_2+1},\cdots,K_{n_b+1}$  with a central cut vertex. Then $n=1+\sum_{i=1}^b n_i$. 
Recall that,  $\pi=\{V_0,V_1,V_2,\cdots, V_b\}$ is the partition of vertex set of  $\mathcal{S}(n_1,n_2,\cdots,n_b)$, where $V_0$ is the singleton set consisting of the central cut vertex and for $1\leq i\leq b$, $V_i$ consisting of vertices of block $K_{n_i+1}$  except the central cut vertex. We will use Eqns.~\eqref{eqn:alpha} and~\eqref{eqn:beta}, and a few lemmas to compute the inverse of $\Delta$.

Let $\mathcal{\widehat{L}}=[\mathcal{\widehat{L}}_{uv}]$ be a symmetric matrix of order $n\times n$, where
\begin{equation}\label{eqn:L-hat}
	\mathcal{\widehat{L}}_{uv} = \begin{cases}
		\beta & \text{ if } u=v \text{ and } u\in V_0,\\
		- \alpha_{\widehat{n_i}} & \text{ if } u \neq v, u\in V_0 \text{ and } v\in V_i, \\
		(6\beta_{\widehat{n_i}} -\alpha_{\widehat{n_i}}) +(\alpha-2\beta) & \text{ if } u=v \text{ and } u\in V_i, \\
		(6\beta_{\widehat{n_i}} -\alpha_{\widehat{n_i}})  & \text{ if }  u \neq v \text{ and } u,v\in V_i \text{ and } \\
		2\alpha_{\widehat{n_in_j}} & \text{ if }  u \neq v,  u\in V_i \text{ and } v \in V_j.\\
	\end{cases}
\end{equation}

\begin{lem}\label{lem:L-hat1=0}
 $\mathcal{\widehat{L}}\mathds{1} = \mathbf{0}$ and $\mathds{1}^t \mathcal{\widehat{L}}= \mathbf{0}$ 
\end{lem}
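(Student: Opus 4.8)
The plan is to verify that $\widehat{\mathcal{L}}\mathds{1}=\mathbf{0}$ directly by computing the row sums, using the block structure induced by the partition $\pi=\{V_0,V_1,\ldots,V_b\}$; the claim $\mathds{1}^t\widehat{\mathcal{L}}=\mathbf{0}$ then follows immediately since $\widehat{\mathcal{L}}$ is symmetric. There are two types of rows to handle: the single row indexed by the central cut vertex in $V_0$, and the rows indexed by a vertex in some block set $V_i$. For the $V_0$ row, the sum is $\beta + \sum_{i=1}^b n_i(-\alpha_{\widehat{n_i}})$, and this is exactly $\beta - \sum_{i=1}^b n_i\alpha_{\widehat{n_i}}=0$ by the very definition of $\beta$ in Eqn.~\eqref{eqn:beta}.

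For a row indexed by a vertex $u\in V_i$, the contributions are: $-\alpha_{\widehat{n_i}}$ from the $V_0$ column; the diagonal entry $(6\beta_{\widehat{n_i}}-\alpha_{\widehat{n_i}})+(\alpha-2\beta)$; the $n_i-1$ off-diagonal entries within $V_i$, each equal to $6\beta_{\widehat{n_i}}-\alpha_{\widehat{n_i}}$; and for each $j\neq i$ the $n_j$ entries $2\alpha_{\widehat{n_in_j}}$. Collecting these, the row sum equals
\[
-\alpha_{\widehat{n_i}} + (\alpha-2\beta) + n_i(6\beta_{\widehat{n_i}}-\alpha_{\widehat{n_i}}) + 2\sum_{j\neq i} n_j\,\alpha_{\widehat{n_i n_j}}.
\]
So the task reduces to showing this expression vanishes. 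The key identities needed are the ``one-step peeling'' relations $\alpha = (3n_i+1)\alpha_{\widehat{n_i}}$, $\alpha_{\widehat{n_i}} = (3n_j+1)\alpha_{\widehat{n_in_j}}$ for $j\neq i$, and $\beta = n_i\alpha_{\widehat{n_i}} + \beta_{\widehat{n_i}}\cdot(\text{appropriate factor})$; more precisely, from Eqn.~\eqref{eqn:beta} one has $\beta = n_i\alpha_{\widehat{n_i}} + (3n_i+1)\beta_{\widehat{n_i}}$, since each term $n_k\alpha_{\widehat{n_k}}$ with $k\neq i$ carries the factor $(3n_i+1)$ that can be pulled out, leaving $n_k\alpha_{\widehat{n_in_k}}$. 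Substituting $\alpha = (3n_i+1)\alpha_{\widehat{n_i}}$ and $2\sum_{j\neq i}n_j\alpha_{\widehat{n_in_j}} = 2\beta_{\widehat{n_i}}$ into the displayed expression, everything collapses to a polynomial identity in $n_i$, $\alpha_{\widehat{n_i}}$, and $\beta_{\widehat{n_i}}$, which I expect to be an exact cancellation after grouping the $\alpha_{\widehat{n_i}}$-terms and the $\beta_{\widehat{n_i}}$-terms separately.

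The main obstacle is purely bookkeeping: one must be careful that the $-\alpha_{\widehat{n_i}}$ term is the $V_0$ interaction, that the within-block off-diagonal count is $n_i-1$ (not $n_i$) so that the diagonal's ``extra'' piece $(\alpha-2\beta)$ is what makes the within-block part telescope, and that the cross-block sum $2\sum_{j\neq i}n_j\alpha_{\widehat{n_in_j}}$ is correctly recognized as $2\beta_{\widehat{n_i}}$. Once the substitutions $\alpha=(3n_i+1)\alpha_{\widehat{n_i}}$ and $\beta = n_i\alpha_{\widehat{n_i}} + (3n_i+1)\beta_{\widehat{n_i}}$ are in place, the verification is a short algebraic check with no residual difficulty; I anticipate the coefficient of $\alpha_{\widehat{n_i}}$ to be $-1 + (3n_i+1) - 2n_i - n_i + \text{(from }(\alpha-2\beta))$ style terms summing to $0$, and similarly for $\beta_{\widehat{n_i}}$ the coefficient $6n_i - 2(3n_i+1)\cdot(\ldots) + 2 = 0$ after regrouping. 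I would present the $V_0$-row computation first as the easy case, then the $V_i$-row computation with the peeling identities stated explicitly, and close by invoking symmetry for the column-sum statement.
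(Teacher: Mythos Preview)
Your proposal is correct and follows essentially the same approach as the paper: compute the row sum for the $V_0$-row (immediate from the definition of $\beta$), then for a $V_i$-row write the sum as $-\alpha_{\widehat{n_i}} + (\alpha-2\beta) + n_i(6\beta_{\widehat{n_i}}-\alpha_{\widehat{n_i}}) + 2\beta_{\widehat{n_i}}$ and reduce using the peeling identities $\alpha=(3n_i+1)\alpha_{\widehat{n_i}}$ and $\beta = n_i\alpha_{\widehat{n_i}} + (3n_i+1)\beta_{\widehat{n_i}}$, then invoke symmetry. Your final ``anticipated coefficients'' paragraph is a bit garbled, but once you actually carry out the substitution the $\alpha_{\widehat{n_i}}$- and $\beta_{\widehat{n_i}}$-coefficients are $-1+(n_i+1)-n_i=0$ and $-2(3n_i+1)+6n_i+2=0$, exactly as the paper obtains.
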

\begin{proof}
Let $\nu = \mathcal{\widehat{L}}\mathds{1}$. For $v\in V_0$, we get $\nu(v)=\beta - \sum_{k=1}^b n_k \alpha_{\widehat{n_k}} =\beta -\beta=0.$ Next, for $1\leq i \leq b$ and  $v\in V_i$, we have
\begin{align}\label{eqn:L.1=0}
\nu(v)&= - \alpha_{\widehat{n_i}} + n_i (6\beta_{\widehat{n_i}} -\alpha_{\widehat{n_i}}) +(\alpha-2\beta) + 2 \sum_{k=1\atop k\neq i}^{b}n_k \alpha_{\widehat{n_in_k}}\\ \nonumber
&=  - \alpha_{\widehat{n_i}} + [  n_i (6\beta_{\widehat{n_i}} -\alpha_{\widehat{n_i}}) + (\alpha-2\beta)+ 2 \beta_{\widehat{n_i}} ]\\ \nonumber
&=  - \alpha_{\widehat{n_i}} + [(\alpha - n_i \alpha_{\widehat{n_i}}) + 2[(3n_i+1)\beta_{\widehat{n_i}} - \beta]]\\ \nonumber
&= - \alpha_{\widehat{n_i}} + [(2 n_i+1) \alpha_{\widehat{n_i}} + 2n_i  \alpha_{\widehat{n_i}}]\\ \nonumber
&=  - \alpha_{\widehat{n_i}} +  \alpha_{\widehat{n_i}}=0.\nonumber
\end{align}
The desired result follows since $\mathcal{\widehat{L}}$ is symmetric.
\end{proof}
In view of Eqns.~\eqref{eqn:alpha} and~\eqref{eqn:beta}, it is easy to notice that $\alpha -2\beta \neq 0$ is equivalent to $\cof \Delta \neq 0$. Under the assumption $\cof \Delta \neq 0$, we define the constant $\lambda$, an $n$-dimensional column vector $\eta$ and a Laplacian-like matrix $\mathcal{L}$ for the graph $\mathcal{S}(n_1,n_2,\cdots,n_b)$ as follows: 
\begin{equation}\label{eqn:lambda_sq}
\ds \lambda =\frac{\beta} {\alpha -2\beta},
\end{equation} 

\begin{equation}\label{eqn:eta_sq}
\eta = \dfrac{1}{\alpha -2\beta} \left[ \begin{array}{c|c|c|c|c}
			\ds \alpha -3\beta & \alpha_{\widehat{n_1}}\mathds{1}_{n_1}^t & \alpha_{\widehat{n_2}}\mathds{1}_{n_2}^t & \cdots & \alpha_{\widehat{n_b}}\mathds{1}_{n_b}^t
		\end{array}\right]^t,
\end{equation}
and 
\begin{equation}\label{eqn:Lap}
\mathcal{L}=\dfrac{1}{\alpha-2\beta}\mathcal{\widehat{L}},
\end{equation}
where $\mathcal{\widehat{L}}$ is the matrix as defined in Eqn.~\eqref{eqn:L-hat}. By Lemma~\ref{lem:L-hat1=0}, $\mathcal{L}$ is a  Laplacian-like matrix  as  $\mathcal{L}\mathds{1} = \mathbf{0}$ and $\mathds{1}^t \mathcal{L} = \mathbf{0}$. We now prove a few lemmas involving $\lambda, \eta$ and $\mathcal{L}$ that help us to compute the inverse of $\Delta$ in the desired form. 
 \begin{lem}\label{lem:sq-nu-1}
Let $\mathcal{S}(n_1,n_2,\cdots,n_b)$ be a starlike block graph  and $\mathcal{S}(n_1,n_2,\cdots,n_b) \neq \mathcal{S}(1,1)$.  If $\Delta$ is the squared distance matrix of $\mathcal{S}(n_1,n_2,\cdots,n_b)$, then $\Delta\eta = \lambda \mathds{1}$, where $\lambda$ and $\eta$ is as defined in Eqns.~\eqref{eqn:lambda_sq} and~\eqref{eqn:eta_sq}, respectively.
\end{lem}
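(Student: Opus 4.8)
The plan is to verify the identity $\Delta\eta = \lambda\mathds{1}$ directly by computing the vector $\Delta\eta$ block by block, exploiting the block structure of $\Delta$ in Eqn.~\eqref{eqn:Delta-starlike} and of $\eta$ in Eqn.~\eqref{eqn:eta_sq}. Since both $\Delta$ and $\eta$ are constant on the parts of the partition $\pi=\{V_0,V_1,\ldots,V_b\}$ (each block $\Delta_{ij}$ has constant row sums and each block of $\eta$ is a multiple of $\mathds{1}_{n_i}$), the product $\Delta\eta$ will automatically be constant on each $V_i$; the task reduces to showing that this constant value equals $\lambda$ for every part. Equivalently, after clearing the common factor $\frac{1}{\alpha-2\beta}$, I would show that the $V_0$-coordinate of $\Delta\,(\alpha-2\beta)\eta$ equals $\beta$, and that the $V_i$-coordinate equals $\beta$ for each $1\le i\le b$.

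First I would handle the $V_0$ row. The relevant row of $\Delta$ is $(0\mid \mathds{1}_{n_1}^t\mid\cdots\mid\mathds{1}_{n_b}^t)$, so the $V_0$-coordinate of $(\alpha-2\beta)\Delta\eta$ is $\sum_{i=1}^b n_i\,\alpha_{\widehat{n_i}} = \beta$, directly by the definition of $\beta$ in Eqn.~\eqref{eqn:beta}. Next, for a fixed $i$ and $v\in V_i$, the corresponding row of $\Delta$ contributes: a $1$ against the $V_0$-entry $(\alpha-3\beta)$; the row of $J_{n_i}-I_{n_i}$ against $\alpha_{\widehat{n_i}}\mathds{1}_{n_i}$, giving $(n_i-1)\alpha_{\widehat{n_i}}$; and for each $j\ne i$ the block $4J_{n_i\times n_j}$ against $\alpha_{\widehat{n_j}}\mathds{1}_{n_j}$, giving $4\sum_{j\ne i} n_j\alpha_{\widehat{n_j}} = 4(\beta - n_i\alpha_{\widehat{n_i}})$. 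Summing, the $V_i$-coordinate of $(\alpha-2\beta)\Delta\eta$ is
$$
(\alpha-3\beta) + (n_i-1)\alpha_{\widehat{n_i}} + 4\beta - 4n_i\alpha_{\widehat{n_i}} = (\alpha+\beta) - (3n_i+1)\alpha_{\widehat{n_i}}.
$$
Since $(3n_i+1)\alpha_{\widehat{n_i}} = \alpha$ by the definition of $\alpha$ and $\alpha_{\widehat{n_i}}$ in Eqn.~\eqref{eqn:alpha}, this collapses to $\beta$, as required.

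Dividing through by $\alpha-2\beta$ (which is nonzero precisely because $\mathcal{S}(n_1,\ldots,n_b)\ne\mathcal{S}(1,1)$, by Corollary~\ref{cor:cof} and the remark that $\alpha-2\beta\ne0$ is equivalent to $\cof\Delta\ne0$) then gives $\Delta\eta = \frac{\beta}{\alpha-2\beta}\mathds{1} = \lambda\mathds{1}$. There is no real obstacle here beyond bookkeeping; the only point requiring a moment's care is the telescoping $4\sum_{j\ne i} n_j\alpha_{\widehat{n_j}} = 4(\beta - n_i\alpha_{\widehat{n_i}})$ and the two uses of the identity $(3n_i+1)\alpha_{\widehat{n_i}}=\alpha$, both of which are immediate from Eqns.~\eqref{eqn:alpha} and~\eqref{eqn:beta}. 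I would present the computation compactly for a generic $v\in V_i$ and note that the $V_0$ case is even simpler.
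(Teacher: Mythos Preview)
Your proposal is correct and follows essentially the same approach as the paper: both compute $\Delta\eta$ coordinatewise over the parts $V_0$ and $V_i$, use $\sum_{j\ne i} n_j\alpha_{\widehat{n_j}}=\beta-n_i\alpha_{\widehat{n_i}}$, and collapse the $V_i$-coordinate via $(3n_i+1)\alpha_{\widehat{n_i}}=\alpha$. If anything, your write-up is slightly cleaner and makes the role of the hypothesis $\mathcal{S}(n_1,\ldots,n_b)\ne\mathcal{S}(1,1)$ (ensuring $\alpha-2\beta\ne0$) more explicit than the paper does.
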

\begin{proof}
Let $\nu = \Delta\eta $. For $v\in V_0$, we get $\nu(v)=\dfrac{1}{\alpha-2\beta}  \sum_{k=1}^b n_k  \alpha_{\widehat{n_k}} = \dfrac{\beta}{\alpha-2\beta}=\lambda.$ Next, for $1\leq i \leq b$ and  $v\in V_i$, we have
\begin{align*}
\nu(v)&= \dfrac{1}{\alpha-2\beta}\left[ (\alpha -3\beta) + (n_i-1)\alpha_{\widehat{n_i}}+ 4 \sum_{k=1\atop k\neq i}^b n_k  \alpha_{\widehat{n_k}}\right]\\
&=\dfrac{1}{\alpha-2\beta}\left[ (\alpha -(n_i-1))\alpha_{\widehat{n_i}} -3\beta +  4\sum_{k=1\atop k\neq i}^b n_k  \alpha_{\widehat{n_k}}\right]\\
&=\dfrac{1}{\alpha-2\beta}\left[ 4n_i \alpha_{\widehat{n_i}} -3\beta + 4\sum_{k=1\atop k\neq i}^b n_k  \alpha_{\widehat{n_k}}\right]\\
&=\dfrac{1}{\alpha-2\beta}\left[ 4\beta -3\beta \right]\\
&= \dfrac{\beta}{\alpha-2\beta}=\lambda.
\end{align*}
This concludes the proof.
\end{proof}

\begin{lem}\label{lem:Lsq+I=nu}
Let $\mathcal{S}(n_1,n_2,\cdots,n_b)$ be a starlike block graph  and $\mathcal{S}(n_1,n_2,\cdots,n_b) \neq \mathcal{S}(1,1)$.  If $\Delta$ is the squared distance matrix of $\mathcal{S}(n_1,n_2,\cdots,n_b)$ and  $\mathcal{L}$ is the Laplacian-like matrix  defined in Eqn.~\eqref{eqn:Lap},  then $\mathcal{L}\Delta + I= \eta \mathds{1}^t$, where $\eta$ is as defined in Eqn.~\eqref{eqn:eta_sq}.
\end{lem}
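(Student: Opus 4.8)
The plan is to verify the matrix identity $\mathcal{L}\Delta + I = \eta\mathds{1}^t$ column by column, exploiting the equitable partition $\pi = \{V_0, V_1, \ldots, V_b\}$. Since both $\mathcal{L}$ and $\Delta$ are constant on the blocks induced by $\pi$ (up to the diagonal $-I$ terms already accounted for in the formulas), it suffices to check the identity for one representative column from $V_0$ and one representative column from each $V_i$, and within each such column to check one entry in each part $V_0, V_1, \ldots, V_b$. Concretely, writing $\mathcal{L} = \frac{1}{\alpha-2\beta}\widehat{\mathcal{L}}$, I would first compute the columns of $\Delta$ explicitly from Eqn.~\eqref{eqn:Delta-starlike}: the column indexed by the central vertex is $(0, \mathds{1}^t)^t$, and the column indexed by a vertex in $V_i$ is $(1, \ldots)$ with a $0$ on that vertex, $1$'s on the other $n_i-1$ vertices of $V_i$, and $4$'s on all vertices outside $V_i \cup V_0$. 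Then $(\mathcal{L}\Delta)_{uv}$ is a dot product of the $u$-th row of $\mathcal{L}$ with this column.

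First I would handle the column $v \in V_0$. Here $(\widehat{\mathcal{L}}\Delta)_{\cdot v} = \widehat{\mathcal{L}}\mathds{1}_{\text{(restricted)}}$, but more precisely it is $\widehat{\mathcal{L}}$ times the vector that is $0$ on $V_0$ and $1$ elsewhere; since $\widehat{\mathcal{L}}\mathds{1} = \mathbf 0$ by Lemma~\ref{lem:L-hat1=0}, this equals $-\widehat{\mathcal{L}}_{\cdot w}$ where $w$ is the central vertex, i.e.\ the negative of the $V_0$-column of $\widehat{\mathcal{L}}$. Reading off Eqn.~\eqref{eqn:L-hat}, that column is $(\beta, -\alpha_{\widehat{n_1}}\mathds{1}_{n_1}^t, \ldots, -\alpha_{\widehat{n_b}}\mathds{1}_{n_b}^t)^t$, so $-\widehat{\mathcal{L}}_{\cdot w} = (-\beta, \alpha_{\widehat{n_1}}\mathds{1}^t, \ldots, \alpha_{\widehat{n_b}}\mathds{1}^t)^t$. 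Adding $(\alpha-2\beta)I$'s contribution (which affects only the $V_0$ entry of this column) gives $(\alpha - 3\beta, \alpha_{\widehat{n_1}}\mathds{1}^t, \ldots)^t$, which is exactly $(\alpha-2\beta)\,\eta$; dividing by $\alpha-2\beta$ matches the $V_0$-column of $\eta\mathds{1}^t$, namely $\eta$ itself.

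Next, for a column $v \in V_i$, the $\Delta$-column is $\mathds{1}$ minus the indicator of $v$ minus $3$ times the indicator of $V_i$ plus $3$ times the indicator of $\{v\}$... more cleanly: it equals $4\mathds{1} - 3(\text{indicator of }V_i\cup V_0) - 4e_v + \text{(correction on }V_0\text{)}$. I would instead just write the $\Delta$-column as $c_v = 4\mathds{1} - 3\,\chi_{V_i} - 4\,\chi_{V_0} - 4 e_v + (\text{something})$ and reconcile carefully; the honest approach is to split $c_v = 4\mathds{1} + r_v$ where $r_v$ is supported on $V_0 \cup V_i$ with known small entries ($-4$ on $V_0$, $-3$ on $V_i\setminus\{v\}$, $-4$ on $v$), then use $\widehat{\mathcal{L}}\mathds{1}=\mathbf 0$ to kill the $4\mathds{1}$ term, leaving $\widehat{\mathcal{L}} r_v$, a sum of only $n_i + 1$ columns of $\widehat{\mathcal{L}}$. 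Expanding this using Eqn.~\eqref{eqn:L-hat} and then adding the $(\alpha-2\beta)$ from $I$ on the $v$-entry, I must check three cases for the output entry $u$: $u\in V_0$, $u\in V_i$, and $u\in V_j$ with $j\neq i$. In each case the result should collapse — after invoking the defining relations among $\alpha, \beta, \alpha_{\widehat{n_i}}, \beta_{\widehat{n_i}}$ (such as $\alpha = (3n_i+1)\alpha_{\widehat{n_i}}$ and $\beta = n_i\alpha_{\widehat{n_i}} + (3n_i+1)\beta_{\widehat{n_i}}$, already used implicitly in Lemma~\ref{lem:L-hat1=0}) — to $0$ for $u\in V_0$ and $u\in V_j$, and to $(\alpha-2\beta)\alpha_{\widehat{n_i}}$ for $u\in V_i$, which after division is $\alpha_{\widehat{n_i}}$, matching the $V_i$-block of $\eta\mathds{1}^t$.

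The main obstacle is the bookkeeping in the $V_i$-column case: each of the three output-entry cases requires summing entries of $\widehat{\mathcal{L}}$ of several types (diagonal, same-block off-diagonal, different-block) weighted by the $r_v$ coefficients, and then telescoping the resulting polynomial expressions in the $n_k$'s down using the algebraic identities relating $\alpha$, $\beta$, and their "hatted" variants. This is the same kind of computation that appeared in the proof of Lemma~\ref{lem:L-hat1=0}, so I would reuse those intermediate identities; I expect no conceptual difficulty, only the need for careful grouping. A useful sanity check at the end is that $\mathcal{L}\Delta\eta = \mathcal{L}(\lambda\mathds{1}) = \mathbf 0$ by Lemma~\ref{lem:sq-nu-1} and $\mathcal{L}\mathds{1}=\mathbf 0$, while $(\eta\mathds{1}^t - I)\eta = \eta(\mathds{1}^t\eta) - \eta = \eta(\mathds{1}^t\eta - 1)$, so consistency forces $\mathds{1}^t\eta = 1$ — a scalar identity I would verify separately from Eqn.~\eqref{eqn:eta_sq} as confirmation that the formulas are correctly normalized.
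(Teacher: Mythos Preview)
Your overall strategy matches the paper's: both verify $\mathcal{L}\Delta + I = \eta\mathds{1}^t$ by direct case-by-case computation along the partition $\pi$. Your additional device of writing each $\Delta$-column as $4\mathds{1}+r_v$ and using $\widehat{\mathcal{L}}\mathds{1}=\mathbf{0}$ to kill the $4\mathds{1}$ part is a genuine simplification over the paper's brute-force row-times-column expansions; it shrinks each dot product to a sum over $V_0\cup V_i$ only.

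That said, your sketch for the $V_i$-column contains two concrete slips you should fix before executing. First, in $\Delta_{\cdot v}=4\mathds{1}+r_v$ with $v\in V_i$, the $V_0$-entry of $r_v$ is $1-4=-3$, not $-4$. Second, and more importantly, your stated target values are wrong: every column of $\eta\mathds{1}^t$ equals $\eta$, so after adding the $(\alpha-2\beta)$ diagonal correction you must obtain $(\alpha-2\beta)\eta(u)$ in \emph{every} entry --- that is, $\alpha-3\beta$ for $u\in V_0$, $\alpha_{\widehat{n_j}}$ for $u\in V_j$ with $j\neq i$, and $\alpha_{\widehat{n_i}}$ for $u\in V_i$ --- not $0$, $0$, and $(\alpha-2\beta)\alpha_{\widehat{n_i}}$ as you wrote. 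You handled the $V_0$-column correctly, so this is just a lapse; with these targets fixed, the computation collapses via the identities $\alpha=(3n_i+1)\alpha_{\widehat{n_i}}$ and $\beta=n_i\alpha_{\widehat{n_i}}+(3n_i+1)\beta_{\widehat{n_i}}$ exactly as in the paper.
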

\begin{proof}
The definitions of $\Delta$, $\mathcal{L}$ and $\eta$ are based on the vertex partition $\pi=\{V_0,V_1,V_2,\cdots, V_b\}$. Therefore, we now consider various cases depending on the partition $\pi$ to prove this result.\\

\noindent \underline{\textbf{Case 1.}} For $u=v$\\

For this case, $(\mathcal{L}\Delta + I)_{uu}= 1+ (\mathcal{L}\Delta)_{uu}= 1 + \dfrac{ 1}{\alpha-2\beta}(\mathcal{\widehat{L}}\Delta)_{uu}$. We now consider following sub cases:

 \underline{\textbf{Subcase 1.1}} For $u=v$ and $u \in V_0.$
\begin{align*}
(\mathcal{\widehat{L}}\Delta)_{uu} &= - \sum_{k=1}^b n_k  \alpha_{\widehat{n_k}}=-\beta.
\end{align*}
Then, $(\mathcal{L}\Delta + I)_{uu} = 1-\dfrac{\beta}{\alpha-2\beta}=\dfrac{\alpha-3\beta}{\alpha-2\beta}= \eta(u).$\\

 \underline{\textbf{Subcase 1.2}} For $u=v$ and $v \in V_i$ for $1\leq i\leq b.$

\begin{align*}
(\mathcal{\widehat{L}}\Delta)_{uu} &= -  \alpha_{\widehat{n_i}} + (n_i-1)[6\beta_{\widehat{n_i}}- \alpha_{\widehat{n_i}}] + 8 \sum_{k=1\atop k\neq i}^b n_k  \alpha_{\widehat{n_i n_k}}\\
& = -  \alpha_{\widehat{n_i}} + (n_i-1)[6\beta_{\widehat{n_i}}- \alpha_{\widehat{n_i}}] + 8 \beta_{\widehat{n_i}}\\
&= [-1-(n_i-1)] \alpha_{\widehat{n_i}} + [6(n_i-1)+8]\beta_{\widehat{n_i}}\\
&= -n_i\alpha_{\widehat{n_i}} + 2(3n_i+1)\beta_{\widehat{n_i}}\\
&= -n_i\alpha_{\widehat{n_i}} + 2 [\beta -n_i\alpha_{\widehat{n_i}}]\\
&= [-(3n_i +1)\alpha_{\widehat{n_i}} + 2\beta ]+\alpha_{\widehat{n_i}}\\
&= -(\alpha-2\beta)+ \alpha_{\widehat{n_i}}.
\end{align*}
Then, $(\mathcal{L}\Delta + I)_{uu} = 1+ \dfrac{-(\alpha-2\beta)+ \alpha_{\widehat{n_i}}}{(\alpha-2\beta)}= \dfrac{ \alpha_{\widehat{n_i}}}{\alpha-2\beta}=\eta(u).$\\

\noindent \underline{\textbf{Case 2.}} For $u\neq v.$\\

For this case,  $(\mathcal{L}\Delta + I)_{uv}= (\mathcal{L}\Delta)_{uv}= \dfrac{1}{\alpha-2\beta}(\mathcal{\widehat{L}}\Delta)_{uv}$. We now consider following sub cases: \\

\underline{\textbf{Subcase 2.1}} For $u\neq v$ and $u\in V_0, v\in V_i$ for $1\leq i\leq b.$
\begin{align*}
(\mathcal{\widehat{L}}\Delta)_{uv} &= \beta -(n_i-1) \alpha_{\widehat{n_i}} -4 \sum_{k=1 \atop k\neq i}^b n_k  \alpha_{\widehat{ n_k}}\\
&=  \beta -(n_i-1) \alpha_{\widehat{n_i}} -4 \sum_{k=1 }^b n_k  \alpha_{\widehat{ n_k}} + 4n_i \alpha_{\widehat{n_i}}\\
&= \beta + (4n_i-(n_i-1))\alpha_{\widehat{n_i}} -4\beta\\
&= (3n_i+1)\alpha_{\widehat{n_i}} -3\beta\\
&=\alpha -3\beta.
\end{align*}
Then, $(\mathcal{L}\Delta + I)_{uv}= \dfrac{\alpha - 3\beta}{\alpha - 2\beta}=\eta(u)$.\\

\underline{\textbf{Subcase 2.2}} For $u\neq v$ and $u\in V_i, v\in V_0$ for $1\leq i\leq b.$\\
$$(\mathcal{\widehat{L}}\Delta)_{uv} =  n_i (6\beta_{\widehat{n_i}} -\alpha_{\widehat{n_i}}) +(\alpha-2\beta) + 2 \sum_{k=1\atop k\neq i}^{b}n_k \alpha_{\widehat{n_in_k}}.$$
From the calculation of Eqn.~\eqref{eqn:L.1=0}, we get
$$(\mathcal{\widehat{L}}\Delta)_{uv} =  n_i (6\beta_{\widehat{n_i}} -\alpha_{\widehat{n_i}}) +(\alpha-2\beta) + 2 \sum_{k=1\atop k\neq i}^{b}n_k \alpha_{\widehat{n_in_k}}=\alpha_{\widehat{n_i}}$$ and hence $(\mathcal{L}\Delta + I)_{uv}= \dfrac{ 1}{\alpha-2\beta}(\mathcal{\widehat{L}}\Delta)_{uv} = \dfrac{ \alpha_{\widehat{n_i}}}{(\alpha-2\beta)}=\eta(u).$\\

\underline{\textbf{Subcase 2.3}} For $u\neq v$ and $u,v\in V_i$ for $1\leq i\leq b.$
\begin{align*}
(\mathcal{\widehat{L}}\Delta)_{uv} &= -  \alpha_{\widehat{n_i}} +(\alpha-2\beta) + (n_i-1)[6\beta_{\widehat{n_i}}- \alpha_{\widehat{n_i}}] + 8 \sum_{k=1\atop k\neq i}^b n_k  \alpha_{\widehat{n_i n_k}}\\
&=\left[ -  \alpha_{\widehat{n_i}} + (n_i-1)[6\beta_{\widehat{n_i}}- \alpha_{\widehat{n_i}}] + 8 \sum_{k=1\atop k\neq i}^b n_k  \alpha_{\widehat{n_i n_k}}\right] + (\alpha-2\beta).
\end{align*}
From the calculations of Subcase~$1.2$, we get
$$(\mathcal{\widehat{L}}\Delta)_{uv} =[  - (\alpha-2\beta) + \alpha_{\widehat{n_i}}]+(\alpha-2\beta)=\alpha_{\widehat{n_i}}.$$
Then, $(\mathcal{L}\Delta + I)_{uv}= \dfrac{ 1}{\alpha-2\beta}(\mathcal{\widehat{L}}\Delta)_{uv}= \dfrac{ \alpha_{\widehat{n_i}}}{\alpha-2\beta}=\eta(u).$\\

\underline{\textbf{Subcase 2.4}} For $u\neq v$ and $u,\in V_i, v\in V_j$ for $1\leq i\leq b.$
\begin{align*}
(\mathcal{\widehat{L}}\Delta)_{uv} &= -  \alpha_{\widehat{n_i}} +4(\alpha-2\beta)+ 4 n_i(6\beta_{\widehat{n_i}}-  \alpha_{\widehat{n_i}})+2(n_j-1) \alpha_{\widehat{n_in_j}} +8 \sum_{k=1 \atop k\neq i,j}^b n_k \alpha_{\widehat{n_in_k}}\\
&=-  \alpha_{\widehat{n_i}} +4(\alpha-2\beta)+ 4 n_i(6\beta_{\widehat{n_i}}-  \alpha_{\widehat{n_i}})+2(n_j-1) \alpha_{\widehat{n_in_j}} +8 [\beta_{\widehat{n_i}} -n_j  \alpha_{\widehat{n_in_j}}]\\
&= -(4n_i+1)\alpha_{\widehat{n_i}}  + 4(\alpha-2\beta) + 8 (3n_i+1)\beta_{\widehat{n_i}} + [2(n_j-1)-8n_j] \alpha_{\widehat{n_in_j}}\\
&=-[\alpha+n_i\alpha_{\widehat{n_i}}] + 4(\alpha-2\beta) + 8 (\beta- n_i\alpha_{\widehat{n_i}}) -2(3n_j+1) \alpha_{\widehat{n_in_j}}\\
&= 3 \alpha - 9 n_i\alpha_{\widehat{n_i}}- 2 \alpha_{\widehat{n_i}}\\
&= 3\alpha - 3(3n_i+1)\alpha_{\widehat{n_i}} +  \alpha_{\widehat{n_i}}\\
&= 3\alpha - 3\alpha +  \alpha_{\widehat{n_i}}= \alpha_{\widehat{n_i}}.
\end{align*}
Then, $(\mathcal{L}\Delta + I)_{uv}=\dfrac{ \alpha_{\widehat{n_i}}}{\alpha-2\beta}=\eta(u).$ 

Combining the conclusion from all the above cases, the desired result follows.
\end{proof}

We are now ready to prove the inverse of the squared distance
matrix  $\mathcal{S}(n_1,n_2,\cdots,n_b)$ as a rank-one perturbation of a Laplacian-like matrix $\mathcal{L}$ as defined in Eqn.~\eqref{eqn:Lap}.
\begin{theorem}\label{thm:Delta_inv}
Let $\mathcal{S}(n_1,n_2,\cdots,n_b)$ be a starlike block graph and $\mathcal{S}(n_1,n_2,\cdots,n_b) \neq \mathcal{S}(1,1)$. If $\Delta$ is the squared distance matrix of the starlike block graph $\mathcal{S}(n_1,n_2,\cdots,n_b)$, then  $\Delta$ is an invertible matrix and
$$\Delta^{-1} = - \mathcal{L} + \frac{1}{\lambda} \eta \eta^t,$$ 
where $\mathcal{L}$ is the Laplacian-like matrix  defined in Eqn.~\eqref{eqn:Lap}, $\lambda$ and $\eta$ is as defined in Eqns.~\eqref{eqn:lambda_sq} and~\eqref{eqn:eta_sq}, respectively.
\end{theorem}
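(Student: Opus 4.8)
The plan is to verify directly that the matrix $B := -\mathcal{L} + \tfrac{1}{\lambda}\eta\eta^t$ is a right inverse of $\Delta$; since $\det\Delta\neq 0$ by Corollary~\ref{cor:det}, this identifies $B$ with $\Delta^{-1}$. First I would record that $B$ is well-defined, i.e.\ that $\lambda\neq 0$: we have $\beta = \sum_{k} n_k\,\alpha_{\widehat{n_k}} > 0$, and the standing hypothesis $\mathcal{S}(n_1,\dots,n_b)\neq\mathcal{S}(1,1)$ together with Corollary~\ref{cor:cof} gives $\cof\Delta\neq 0$, which is equivalent to $\alpha-2\beta\neq 0$. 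Hence $\lambda = \beta/(\alpha-2\beta)$ is defined and nonzero, so the rank-one correction $\tfrac{1}{\lambda}\eta\eta^t$ makes sense.

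Next I would expand $\Delta B = -\Delta\mathcal{L} + \tfrac{1}{\lambda}(\Delta\eta)\eta^t$ and treat the two terms separately. For the second term, Lemma~\ref{lem:sq-nu-1} gives $\Delta\eta = \lambda\mathds{1}$, so $\tfrac{1}{\lambda}(\Delta\eta)\eta^t = \mathds{1}\eta^t$. For the first term, Lemma~\ref{lem:Lsq+I=nu} gives $\mathcal{L}\Delta = \eta\mathds{1}^t - I$; since both $\Delta$ and $\mathcal{L}$ are symmetric, transposing yields $\Delta\mathcal{L} = (\mathcal{L}\Delta)^t = \mathds{1}\eta^t - I$. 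Substituting,
\[
\Delta B = -\bigl(\mathds{1}\eta^t - I\bigr) + \mathds{1}\eta^t = I,
\]
which completes the argument.

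There is essentially no serious obstacle left at this stage: the computational weight of the theorem sits entirely in Lemmas~\ref{lem:sq-nu-1} and~\ref{lem:Lsq+I=nu}, whose proofs are the long case analyses already carried out. The only points needing a little care are (i) correctly using the symmetry of $\Delta$ and $\mathcal{L}$ to pass from $\mathcal{L}\Delta$ to $\Delta\mathcal{L}$, and (ii) checking that the scalar in the rank-one correction is exactly $1/\lambda$, so that it cancels the factor $\lambda$ produced by $\Delta\eta$ — both immediate from the definitions in Eqns.~\eqref{eqn:lambda_sq}–\eqref{eqn:Lap}. As a consistency check one could instead verify $B\Delta = I$, using $\mathcal{L}\Delta = \eta\mathds{1}^t - I$ directly and $\eta^t\Delta = \lambda\mathds{1}^t$ (the transpose of Lemma~\ref{lem:sq-nu-1}); this yields the same conclusion.
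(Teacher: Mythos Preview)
Your proof is correct and essentially identical to the paper's: both combine Lemmas~\ref{lem:sq-nu-1} and~\ref{lem:Lsq+I=nu} with $\lambda\neq 0$ to verify that $-\mathcal{L}+\tfrac{1}{\lambda}\eta\eta^t$ is an inverse of $\Delta$. The only cosmetic difference is that the paper checks $B\Delta=I$ directly (your ``consistency check''), whereas you transpose via symmetry to check $\Delta B=I$.
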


\begin{proof} 
Using Lemma~\ref{lem:sq-nu-1} yields $\eta^t \Delta=\lambda \mathds{1}^t$, which implies that $\eta \eta^t \Delta= \lambda \eta \mathds{1}^t$. From Corollary~\ref{cor:det},  $\Delta$ is an invertible matrix. Further, Eqns.~\eqref{eqn:beta} and~\eqref{eqn:lambda_sq} gives $\det \Delta \neq 0$ is equivalent to $\lambda\neq 0$, and hence  by Lemma~\ref{lem:Lsq+I=nu} we have $ \mathcal{L}\Delta+I=\eta \mathds{1}^t =\dfrac{1}{\lambda} \eta \eta^t \Delta$.  Therefore, $\Delta^{-1} = - \mathcal{L} + \dfrac{1}{\lambda} \eta\eta^t$.
\end{proof}

We conclude this section with a result that determines a few properties of the Laplacian-like matrix $\mathcal{L}$ as defined in Eqn.~\eqref{eqn:Lap}.

\begin{theorem}\label{thm:propery-L}
Let $\mathcal{S}(n_1,n_2,\cdots,n_b)$ be a starlike block graph  and $\mathcal{S}(n_1,n_2,\cdots,n_b) \neq \mathcal{S}(1,1)$. If  $\mathcal{L}$ is the Laplacian-like matrix  defined in Eqn.~\eqref{eqn:Lap} and $\mathcal{S}(n_1,n_2,\cdots,n_b)$ is on $n =1+\sum_{i=1}^b  n_i  $ vertices then $\mathcal{L}$ is a positive semidefinte matrix with $\textup{rank} (\mathcal{L})= n-1$. Furthermore, the cofactors of any two elements of $\mathcal{L}$ are equal to $\dfrac{1}{\alpha-2\beta}$, where $\alpha$ and $\beta$ as defined in Eqns.~\eqref{eqn:alpha} and~\eqref{eqn:beta}.
\end{theorem}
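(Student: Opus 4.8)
The plan is to extract all three conclusions from the inverse formula in Theorem~\ref{thm:Delta_inv}, namely $\Delta^{-1} = -\mathcal{L} + \frac{1}{\lambda}\eta\eta^t$, together with the inertia computation in Theorem~\ref{thm:in_sq}. First I would rewrite the identity as $\mathcal{L} = \frac{1}{\lambda}\eta\eta^t - \Delta^{-1}$. Since $\lambda = \frac{\beta}{\alpha-2\beta}$ has the same sign as $\frac{1}{\alpha-2\beta}$ (as $\beta>0$), and since by Corollary~\ref{cor:det} together with Theorem~\ref{thm:cof} one checks $\alpha - 2\beta \ne 0$ (equivalently $\cof\Delta \ne 0$, excluded only for $\mathcal{S}(1,1)$), the rank-one term $\frac{1}{\lambda}\eta\eta^t$ is a nonzero positive or negative semidefinite matrix of rank exactly one. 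To pin down signs I would use Theorem~\ref{thm:in_sq}: $\textup{In}(\Delta) = (1,0,n-1)$, so $\Delta$ has a single positive eigenvalue and $-\Delta^{-1}$ therefore has inertia $(n-1,0,1)$ — that is, $-\Delta^{-1}$ is ``almost'' positive semidefinite, failing in exactly one direction. The natural claim is that $\frac{1}{\lambda}\eta\eta^t$ is the positive-semidefinite correction that repairs this one bad direction, i.e. $\lambda > 0$ and $\eta$ is (a scalar multiple of) the Perron eigenvector of $\Delta$.

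Concretely, here are the key steps in order. (1) Show $\lambda > 0$: from $\lambda = \beta/(\alpha - 2\beta)$ and $\beta > 0$, this reduces to $\alpha - 2\beta > 0$; but $\alpha - 2\beta = \cof\Delta \cdot (-1)^{\sum n_i}$ up to the sign bookkeeping in Theorem~\ref{thm:cof}, and more directly one argues $\alpha - 2\beta > 0$ because $\sum_i \frac{n_i}{3n_i+1} < \frac{1}{2}$ whenever the graph is not $\mathcal{S}(1,1)$ — indeed $\frac{n_i}{3n_i+1} \le \frac13$ with strict inequality unless $n_i=1$, and $b$ terms each $\le \frac13$ sum to $< \frac12$ once we are past the boundary case, which is exactly the content of the proof of Corollary~\ref{cor:cof}. (Care is needed: with $b=2$, $n_1=n_2=1$ gives equality, and that is precisely the excluded graph; for all other starlike block graphs the sum is strictly below $\frac12$, so $\alpha - 2\beta>0$.) (2) With $\lambda>0$, the matrix $\frac{1}{\lambda}\eta\eta^t$ is positive semidefinite of rank $1$. (3) Apply Weyl's inequality (Theorem~\ref{thm:interlacing}) to $\mathcal{L} = (-\Delta^{-1}) + \frac{1}{\lambda}\eta\eta^t$: since $-\Delta^{-1}$ has $n-1$ nonnegative (in fact positive) eigenvalues and one negative eigenvalue, adding a rank-one positive semidefinite matrix interlaces so that $\mathcal{L}$ has at least $n-1$ nonnegative eigenvalues; combined with $\mathcal{L}\mathds{1} = \mathbf{0}$ (Lemma~\ref{lem:L-hat1=0}), which forces at least one zero eigenvalue, and the fact that $\mathcal{L}$ can have at most one zero from the rank-one-perturbation structure (the negative eigenvalue of $-\Delta^{-1}$ can be lifted to at most $0$), we conclude $\textup{In}(\mathcal{L}) = (n-1,0,1)$, i.e. $\mathcal{L}$ is positive semidefinite with $\textup{rank}(\mathcal{L}) = n-1$. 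Alternatively, and perhaps cleaner, one shows directly that $\eta$ is a (the) positive eigenvector of $\Delta$: from Lemma~\ref{lem:sq-nu-1}, $\Delta \eta = \lambda\mathds{1}$, which does not immediately say $\eta$ is an eigenvector, so instead I would argue $\textup{rank}(\mathcal{L}) \ge n-1$ by exhibiting $n-1$ independent columns or by the interlacing argument, and $\textup{rank}(\mathcal{L}) \le n-1$ from $\mathcal{L}\mathds{1}=\mathbf{0}$.

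For the final assertion about cofactors, I would use the standard fact (e.g. the matrix-tree-type identity behind Lemma~\ref{Lem:cof}, or directly the adjugate) that for any matrix $M$ with $M\mathds{1} = \mathbf{0}$ and $\mathds{1}^t M = \mathbf{0}$ and $\textup{rank}(M) = n-1$, all cofactors $c_{ij}$ of $M$ are equal to a common value $c$, and $\textup{Adj}(M) = c\,\mathds{1}\mathds{1}^t$. To identify $c = \frac{1}{\alpha-2\beta}$, I would relate $\mathcal{L} = \frac{1}{\alpha - 2\beta}\widehat{\mathcal{L}}$ to $\Delta$ via $\Delta^{-1} = -\mathcal{L} + \frac{1}{\lambda}\eta\eta^t$: multiplying by $\Delta$ and taking $\mathds{1}^t(\cdot)\mathds{1}$, or better, using $\textup{Adj}(\mathcal{L})\,\Delta^{-1}$-type manipulations together with $\cof \Delta = \mathds{1}^t\textup{Adj}(\Delta)\mathds{1}$ and the already-computed $\cof\Delta$, $\det\Delta$ from Theorem~\ref{thm:cof} and Corollary~\ref{cor:det}. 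A slick route: since $\textup{Adj}(\mathcal{L}) = c\,\mathds{1}\mathds{1}^t$ and $\mathcal{L}\cdot\textup{Adj}(\mathcal{L}) = \det(\mathcal{L})I = 0$, one instead pins $c$ down by choosing a convenient single cofactor — delete the row and column indexed by the central cut vertex $V_0$ and compute $\det \widehat{\mathcal{L}}(1\mid 1)$ directly using the block structure in Eqn.~\eqref{eqn:L-hat}, which after the same kind of row/column reductions used throughout Section~\ref{sec:det-cof-iner} evaluates to $(\alpha - 2\beta)^{n-2}$, so that the cofactor of $\mathcal{L}$ is $(\alpha-2\beta)^{-(n-1)}\cdot(\alpha-2\beta)^{n-2} = \frac{1}{\alpha-2\beta}$.

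\textbf{Main obstacle.} The routine but genuinely delicate part is Step~(1) — verifying $\alpha - 2\beta > 0$ (not merely $\ne 0$) for every starlike block graph other than $\mathcal{S}(1,1)$, since the whole sign/definiteness argument hinges on $\lambda > 0$. The inequality $\sum_i \frac{n_i}{3n_i+1} < \frac12$ can fail to be obvious for small $b$: for $b=1$ it is $\frac{n_1}{3n_1+1} < \frac12 \iff 2n_1 < 3n_1+1$, always true; for $b=2$ it is $\frac{n_1}{3n_1+1} + \frac{n_2}{3n_2+1} < \frac12$, which is false only at $n_1=n_2=1$ and needs a short monotonicity check otherwise; for $b \ge 3$ one needs $b$ terms each $\le \frac13$ to sum below $\frac12$, which is false in general (e.g. $b=3$, all $n_i$ large, gives a sum near $1$) — so in fact $\alpha - 2\beta$ need \emph{not} be positive, only nonzero, and the correct statement is subtler. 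This means the positive-semidefiniteness of $\mathcal{L}$ must be argued without assuming $\lambda>0$ pointwise; the right fix is to observe that $\mathcal{L}$ and $\mathcal{L}$-via-$\widehat{\mathcal{L}}$ share a sign, prove $\widehat{\mathcal{L}}$ positive semidefinite of rank $n-1$ directly from its block form (it visibly has the shape of a weighted graph Laplacian-like Gram matrix), and then note $\mathcal{L} = (\alpha-2\beta)^{-1}\widehat{\mathcal{L}}$ forces $\alpha - 2\beta > 0$ after the fact, because a negative scalar multiple of a nonzero PSD matrix cannot equal $-\mathcal{L}+\frac1\lambda\eta\eta^t$ with the inertia constraints from Theorem~\ref{thm:in_sq}. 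Getting this logical order right — establishing PSD-ness of $\widehat{\mathcal{L}}$ intrinsically, then deducing the sign of $\alpha - 2\beta$, rather than the reverse — is where the real care lies.
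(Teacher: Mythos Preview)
Your overall strategy --- combine $\Delta^{-1}=-\mathcal{L}+\frac{1}{\lambda}\eta\eta^t$ from Theorem~\ref{thm:Delta_inv} with $\textup{In}(\Delta)=(1,0,n-1)$ from Theorem~\ref{thm:in_sq} and Weyl interlacing --- is exactly the paper's approach, and you have correctly isolated the one genuinely delicate point: the sign of $\lambda$, equivalently of $\alpha-2\beta$. The paper's own proof applies Theorem~\ref{thm:interlacing} with $B=\Delta^{-1}$ and $A=-\mathcal{L}$, which tacitly assumes $\frac{1}{\lambda}\eta\eta^t=\mathbf{x}\mathbf{x}^t$, i.e.\ $\lambda>0$; so the gap you flag is real and is present in the paper as well.

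Your proposed remedy, however, cannot work, because the positive-semidefiniteness assertion is in fact false. Since $\dfrac{n_i}{3n_i+1}\ge\dfrac14$ for every $n_i\ge 1$, one has $\displaystyle\sum_{i=1}^b\frac{n_i}{3n_i+1}\ge\frac{b}{4}\ge\frac12$ for all $b\ge 2$, with equality precisely at $\mathcal{S}(1,1)$. Hence $\alpha-2\beta<0$ and $\lambda<0$ for \emph{every} graph covered by the theorem. Running the interlacing in the correct direction (now $-\mathcal{L}=\Delta^{-1}+\mathbf{y}\mathbf{y}^t$ with $\mathbf{y}$ real) gives $\lambda_1(-\mathcal{L})\ge\lambda_1(\Delta^{-1})>0$ and $\lambda_3(-\mathcal{L})\le\lambda_2(\Delta^{-1})<0$, forcing the zero eigenvalue of $-\mathcal{L}$ into position two and yielding $\textup{In}(\mathcal{L})=(n-2,1,1)$. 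Concretely, $\mathcal{L}_{11}=\beta/(\alpha-2\beta)=\lambda<0$, so $\mathcal{L}$ is never PSD; for $\mathcal{S}(1,1,1)$ one gets $\alpha-2\beta=-32$ and the non-central diagonal entries of $\widehat{\mathcal{L}}$ vanish while the adjacent off-diagonals do not, so $\widehat{\mathcal{L}}$ is not semidefinite either. Your fallback plan of proving $\widehat{\mathcal{L}}$ PSD directly therefore has no chance.

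The rank claim $\textup{rank}(\mathcal{L})=n-1$ and the cofactor claim do survive, and both follow from the same interlacing (zero is a simple eigenvalue). For the cofactor value the paper takes a shorter route than your proposed direct evaluation of $\det\widehat{\mathcal{L}}(1\mid 1)$: apply $\det(A+\mathbf{u}\mathbf{v}^t)=\det A+\mathbf{v}^t(\textup{Adj }A)\,\mathbf{u}$ to $\Delta^{-1}=-\mathcal{L}+\frac{1}{\lambda}\eta\eta^t$, use $\det\mathcal{L}=0$ and $\textup{Adj}(\mathcal{L})=cJ$, check $\eta^t\mathds{1}=1$, and read off $c=(-1)^{n-1}\lambda/\det\Delta=1/(\alpha-2\beta)$.
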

\begin{proof}
Let $\Delta$ be the squared distance matrix of $\mathcal{S}(n_1,n_2,\cdots,n_b)$. Then, by Theorem~\ref{thm:Delta_inv}, we have $\Delta^{-1}= - \mathcal{L} + \frac{1}{\lambda} \eta \eta^t$. If the eigenvalues of $\Delta^{-1}$ and $- \mathcal{L}$ are arranged in decreasing order as in Eqn.~\eqref{eqn:ev-hermitian}, then using Theorem~\ref{thm:interlacing}, we get
$$\lambda_2(- \mathcal{L} )\leq \lambda_2(\Delta^{-1}) \leq \lambda_1(- \mathcal{L} )\leq \lambda_1(\Delta^{-1}).$$
Since $\mathcal{L}\mathds{1} = \mathds{1}^t \mathcal{L}= \mathbf{0}$, it follows that $0$ is an eigenvalue of $\mathcal{L}$. By Theorem~\ref{thm:in_sq}, we have $\textup{In}(\Delta)=(1,0,n-1)$ and hence all the eigenvalues of  $\mathcal{L}$ are  non-negative and 0 is a simple eigenvalue. Therefore,  $\mathcal{L}$ is a positive semidefinte matrix with $\textup{rank} (\mathcal{L})= n-1$.

Next,  using the property $\det(A + \mathbf{u}\mathbf{v}^t) = \det(A) + \mathbf{v}^{t}\textup{ Adj }A \ \mathbf{u}$, we have 
$$\det(\Delta^{-1}) = \det(- \mathcal{L}) + \frac{1}{\lambda} \eta^t \textup{ Adj }(- \mathcal{L}) \eta.$$
Using  $\textup{rank} (\mathcal{L})= n-1$, we have $\ds \det(\Delta^{-1}) =  \frac{1}{\lambda} \eta^t \textup{ Adj }(- \mathcal{L}) \eta = \frac{(-1)^{n-1}}{\lambda} \eta^t \textup{ Adj }( \mathcal{L}) \eta.$ Since $\mathcal{L}$ is a symmetric matrix and $\mathcal{L} \mathds{1} = \mathbf{0}$, using~\cite[Lemma~$4.2$]{Bapat}  the cofactors of any two elements of $\mathcal{L}$ are equal, say $c$. Then, 
\begin{equation}\label{eqn:cof-of-L}
\det(\Delta^{-1}) = \frac{(-1)^{n-1}}{\lambda} \eta^t (cJ) \eta = \frac{(-1)^{n-1} c}{\lambda} \eta^t J \eta= \frac{(-1)^{n-1} c}{\lambda} \left(\eta^t \mathds{1}\right)^2.
\end{equation}
Using Eqns.~\eqref{eqn:beta} and~\eqref{eqn:eta_sq}, we have $\ds \eta^t \mathds{1}= \dfrac{1}{\alpha-2\beta}\left[\alpha-3\beta +\sum_{k=1}^b n_k   \alpha_{\widehat{n_k}}\right]= \dfrac{\alpha-3\beta +\beta}{\alpha-2\beta}=1.$ Substituting $\eta^t \mathds{1}=1$ in  Eqn.~\eqref{eqn:cof-of-L}, further using Corollary~\ref{cor:det} and Eqn.~\eqref{eqn:lambda_sq}, we get $$c= \dfrac{(-1)^{n-1}\lambda}{\det \Delta}=\dfrac{1}{\alpha-2\beta}.$$
This concludes the proof.
\end{proof}

In the next section, we will study the spectral radius of the squared distance matrix of the starlike $\mathcal{S}(n_1,n_2,\cdots,n_b)$.

\section{Maximization and Minimization of Spectral Radius}\label{sec:spectral-radius}

In this section, we consider the class of starlike block graphs with \( n \) vertices and \( b \) blocks. We identify the graphs in this class that uniquely maximize and minimize the spectral radius of the squared distance matrix.

Let $\Delta(G)$ be the squared distance matrix of a graph $G$. Throughout this section, to emphasise the graph $G$, we will denote the characteristic polynomial of $\Delta(G)$ as $P_{\Delta}(G,x)$, and the spectral radius of $\Delta(G)$ as $\rho(G)$. We will first recall a result that is an application of the Intermediate Value Theorem, and prove a few lemmas that play an important role in achieving our goal.

\begin{lem}\label{lem:ch-max-root}\cite[Lemma~$6.2$]{JD3}
Let $G$ and $H$ be connected graphs. Let $P_{\Delta}(G,x)$ and $P_{\Delta}(H,x)$ be the characteristic polynomials of $\Delta(G)$ and $\Delta(H)$, respectively. Then the following results hold.
\begin{itemize}
\item [$(i)$] If $x\geq \rho(G),$ then $P_{\Delta}(G,x)>0.$

\item [$(ii)$] If $P_{\Delta}(H,x)> P_{\Delta}(G,x)$ for $x\geq \rho(G),$ then $\rho(H)< \rho(G).$
\end{itemize}
\end{lem}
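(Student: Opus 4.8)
The plan is to read off $\rho(G)$ as the largest root of the monic characteristic polynomial $P_\Delta(G,\cdot)$ using the nonnegativity of the squared distance matrix, and then to deduce $(ii)$ from $(i)$ by a one-line contradiction argument.

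For part $(i)$, I would first note that $\Delta(G)=D(G)\circ D(G)$ is entrywise nonnegative, so by the Perron--Frobenius fact recalled in Section~\ref{sec:matrix}, $\rho(G)=\rho(\Delta(G))$ is the largest eigenvalue of $\Delta(G)$; in particular every (real) eigenvalue of $\Delta(G)$ is at most $\rho(G)$. Factoring $P_\Delta(G,x)=\prod_i\bigl(x-\lambda_i(\Delta(G))\bigr)$ over the eigenvalues of $\Delta(G)$ counted with multiplicity, each linear factor is positive once $x>\rho(G)$, so $P_\Delta(G,x)>0$ there, which is precisely the positivity used in the applications of this lemma. (The degenerate one-vertex case, where $\Delta(G)=[0]$ and $P_\Delta(G,x)=x$, is immediate.)

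For part $(ii)$, I would argue by contradiction. Suppose $\rho(H)\ge\rho(G)$. Since $\Delta(H)$ is also entrywise nonnegative, $\rho(H)=\rho(\Delta(H))$ is an eigenvalue of $\Delta(H)$, hence $P_\Delta(H,\rho(H))=0$. On the other hand, the hypothesis $P_\Delta(H,x)>P_\Delta(G,x)$ is valid for all $x\ge\rho(G)$, in particular at $x=\rho(H)$; combined with part $(i)$ this yields
$$P_\Delta(H,\rho(H))\;>\;P_\Delta(G,\rho(H))\;\ge\;0,$$
contradicting $P_\Delta(H,\rho(H))=0$. Therefore $\rho(H)<\rho(G)$.

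I do not expect a genuine obstacle here: the statement is essentially a repackaging of Perron--Frobenius together with the elementary fact that a monic polynomial is positive to the right of its largest root. The only points demanding attention are (a) checking that $\Delta(G)$ and $\Delta(H)$ are indeed entrywise nonnegative, so that $\rho(\cdot)$ is simultaneously the spectral radius and a bona fide eigenvalue (this is all that connectedness, i.e.\ finiteness of the distances, is used for), and (b) the boundary value $x=\rho(G)$, at which $P_\Delta(G,\cdot)$ actually vanishes rather than being strictly positive --- but this is harmless, since the contradiction in $(ii)$ is driven by the strict inequality in its hypothesis and only needs $P_\Delta(G,\rho(H))\ge 0$.
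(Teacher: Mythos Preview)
Your argument is correct and is the standard one; the paper itself does not prove this lemma but merely cites it from \cite{JD3}, so there is nothing to compare against. Your observation about the boundary point is also well taken: part~$(i)$ as literally stated fails at $x=\rho(G)$ (where $P_\Delta(G,\rho(G))=0$), but as you note this is irrelevant for $(ii)$ and for the downstream applications in Section~\ref{sec:spectral-radius}, all of which only invoke the lemma with $x>\rho(G)$.
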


\begin{lem}\label{lem:f>g}
Let  $n_1,n_2,\cdots, n_b$ be positive integers such that $n_p-n_q\geq 2$ for some $1\leq p,q\leq b.$ For $x>0$, let
\begin{align*}
	f(x) &= (x+3n_p+1)(x+3n_q+1)\\
	&\qquad \times \left[x - (4x+1)\left(\frac{n_p}{x+3n_p+1} + \frac{n_q}{x+3n_q+1} + \sum_{i =1\atop i\neq p,q}^b \frac{n_i}{x+3n_i+1} \right) \right] \mbox{ and }\\
	g(x) &=  (x+3(n_p-1)+1)(x+3(n_q+1)+1)\\
	&\qquad \times \left[x - (4x+1)\left(\frac{n_p-1}{x+3(n_p-1)+1} + \frac{n_q+1}{x+3(n_q+1)+1} + \sum_{i =1\atop i\neq p,q}^b \frac{n_i}{x+3n_i+1} \right) \right].
\end{align*}
Then, $f(x)>g(x).$
\end{lem}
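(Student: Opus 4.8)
The plan is to collapse the whole inequality into one short algebraic identity. First I would fix the shorthand $\xi_p = x + 3n_p + 1$ and $\xi_q = x + 3n_q + 1$, so that $x + 3(n_p - 1) + 1 = \xi_p - 3$ and $x + 3(n_q + 1) + 1 = \xi_q + 3$, and write $S = \sum_{i \neq p, q} \frac{n_i}{x + 3n_i + 1}$ and $T = x - (4x+1)S$. The point of isolating $S$ and $T$ is that they occur identically in $f$ and in $g$, and that $S \geq 0$ whenever $x > 0$, so $T \leq x$; also $\xi_q - \xi_p = 3(n_q - n_p) = -3(n_p - n_q)$.

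Next I would expand the leading products. Distributing $\xi_p \xi_q$ through the bracket defining $f$ gives
\[
f(x) = \xi_p \xi_q\, T - (4x+1)\bigl(n_p \xi_q + n_q \xi_p\bigr),
\]
and distributing $(\xi_p - 3)(\xi_q + 3)$ through the bracket defining $g$ gives
\[
g(x) = (\xi_p - 3)(\xi_q + 3)\, T - (4x+1)\bigl((n_p - 1)(\xi_q + 3) + (n_q + 1)(\xi_p - 3)\bigr).
\]
Subtracting, the coefficient of $T$ is $\xi_p \xi_q - (\xi_p - 3)(\xi_q + 3) = 3(\xi_q - \xi_p) + 9 = -9(n_p - n_q - 1)$, and a short expansion shows the remaining bracket contributes $n_p \xi_q + n_q \xi_p - (n_p - 1)(\xi_q + 3) - (n_q + 1)(\xi_p - 3) = -6(n_p - n_q - 1)$. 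Hence both coefficients are proportional to $n_p - n_q - 1$, and
\[
f(x) - g(x) = (n_p - n_q - 1)\bigl[\,6(4x+1) - 9T\,\bigr] = 3\,(n_p - n_q - 1)\,(8x + 2 - 3T).
\]

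Finally I would close the argument: the hypothesis $n_p - n_q \geq 2$ gives $n_p - n_q - 1 \geq 1 > 0$, while $T \leq x$ gives $8x + 2 - 3T \geq 5x + 2 > 0$ for $x > 0$, so $f(x) - g(x) > 0$. There is no conceptual obstacle here; the only care needed is in the expansion, and the step that makes everything work is the observation that the two coefficients arising in $f - g$ are multiples of the \emph{same} factor $n_p - n_q - 1$, which is exactly the quantity the hypothesis $n_p - n_q \geq 2$ renders positive. (If one prefers a more structural phrasing, note that $f$ and $g$ equal, up to the common positive factor $\prod_{i \neq p,q}(x + 3n_i + 1)$, the quotient-matrix characteristic polynomials $P_{\mathbf{Q}(\Delta)}$ of the two graphs $\mathcal{S}(n_1,\dots,n_b)$ and $\mathcal{S}(\dots,n_p-1,\dots,n_q+1,\dots)$ from Lemma~\ref{lem:char-Q-Delta}, so the computation above is precisely the comparison of those polynomials; but the direct route is shortest.)
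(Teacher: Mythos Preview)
Your proof is correct and follows the same direct-expansion route as the paper: both compute $f(x)-g(x)$ by multiplying out the two products, observe that every surviving term carries the factor $n_p-n_q-1$, and then check the residual bracket is positive for $x>0$. Your introduction of $T=x-(4x+1)S$ is a clean notational device that packages the sum $S$ into a single symbol, but substituting back $T$ recovers exactly the paper's expression $(n_p-n_q-1)\bigl[6(4x+1)-9x\bigr]+9(4x+1)(n_p-n_q-1)S$; the arguments are otherwise identical.
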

\begin{proof}
Considering the difference $f(x)-g(x)$, we have
\begin{align*}
f(x)-g(x) &= x\left[(x+3n_p+1)(x+3n_q+1)- (x+3(n_p-1)+1)(x+3(n_q+1)+1)\right]\\
         & \quad  \qquad  -(4x+1)\Big[n_p(x+3n_q+1)+n_q(x+3n_p+1)\\ & \qquad   \qquad   \qquad  \qquad  \qquad \quad  - (n_p-1)(x+3(n_q+1)+1) -(n_q+1)(x+3(n_p-1)+1) \Big]\\
         &\quad  \qquad  -(4x+1)\Big[ (x+3n_p+1)(x+3n_q+1)\\
         &\qquad  \qquad \qquad  \qquad \qquad  \qquad  - (x+3(n_p-1)+1)(x+3(n_q+1)+1)\Big]\sum_{i =1\atop i\neq p,q}^b \frac{n_i}{x+3n_i+1} \\
         & = [- 9(n_p-n_q-1)]x -(4x+1)[-6 (n_p-n_q-1)]\\
         & \qquad  \qquad \qquad  \qquad \qquad  \qquad  -(4x+1)[- 9(n_p-n_q-1)] \sum_{i =1\atop i\neq p,q}^b \frac{n_i}{x+3n_i+1}\\
         &= (n_p-n_q-1)[6(4x+1)-9x] + 9(4x+1)\sum_{i =1\atop i\neq p,q}^b \frac{n_i}{x+3n_i+1}.
\end{align*}
Using $n_p-n_q\geq 2$ and $x>0$, we get  $f(x)>g(x).$ 
\end{proof}

\begin{lem}\label{lem:comp_spectral}
Let  $n_1,n_2,\cdots, n_b$ be positive integers such that $n_p-n_q\geq 2$ for some $1\leq p,q\leq b.$  Then,
$$\rho(\mathcal{S}(n_1,n_2,\cdots,n_p,\cdots,n_q, \cdots,n_b)< \rho( \mathcal{S}(n_1,n_2,\cdots,n_p-1,\cdots,n_q+1, \cdots,n_b)).$$
\end{lem}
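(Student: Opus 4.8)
The plan is to compare the characteristic polynomials of the two graphs and invoke Lemma~\ref{lem:ch-max-root}. Write $G = \mathcal{S}(n_1,\ldots,n_p,\ldots,n_q,\ldots,n_b)$ and $H = \mathcal{S}(n_1,\ldots,n_p-1,\ldots,n_q+1,\ldots,n_b)$. By Theorem~\ref{thm:char-poly}, the nontrivial factor of $P_{\Delta}(G,x)$ is
$$
x\prod_{i=1}^{b}(x+3n_i+1) - (4x+1)\sum_{i=1}^b n_i\prod_{j\neq i}(x+3n_i+1),
$$
and after factoring out $\prod_{i\neq p,q}(x+3n_i+1)$ this nontrivial factor is precisely $f(x)$ as defined in Lemma~\ref{lem:f>g} (up to that common positive factor). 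Since the transformation $n_p\mapsto n_p-1$, $n_q\mapsto n_q+1$ leaves $\sum_i n_i$ and hence $n$ unchanged, the two matrices have the same order and the $(x+1)$-power prefactor is identical; the analogous reduction for $H$ produces $g(x)$ times the same common factor $\prod_{i\neq p,q}(x+3n_i+1)$.

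First I would record carefully that $P_{\Delta}(G,x) = (x+1)^{n-1-b}\prod_{i\neq p,q}(x+3n_i+1)\, f(x)$ and $P_{\Delta}(H,x) = (x+1)^{n-1-b}\prod_{i\neq p,q}(x+3n_i+1)\, g(x)$, using that the index set $\{1,\ldots,b\}\setminus\{p,q\}$ and the values $n_i$ for $i\neq p,q$ are untouched by the transformation. For $x \geq \rho(G)$ we have $x > 0$ (indeed $\rho(G) > 0$ since $\Delta(G)$ is a nonzero nonnegative matrix), so the prefactor $(x+1)^{n-1-b}\prod_{i\neq p,q}(x+3n_i+1)$ is strictly positive. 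Then Lemma~\ref{lem:f>g} gives $f(x) > g(x)$ for all $x > 0$, hence $P_{\Delta}(G,x) > P_{\Delta}(H,x)$ for all $x \geq \rho(G)$; equivalently $P_{\Delta}(H,x) < P_{\Delta}(G,x)$ there. By part~$(i)$ of Lemma~\ref{lem:ch-max-root}, $P_{\Delta}(G,x) > 0$ for $x \geq \rho(G)$, but that is not even needed; applying part~$(ii)$ of Lemma~\ref{lem:ch-max-root} with the roles ``$H$'' $=G$ and ``$G$'' $=H$ reversed — that is, using $P_{\Delta}(H,x) < P_{\Delta}(G,x)$ for $x\geq \rho(G)$ — we must be careful about which direction the lemma runs. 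Lemma~\ref{lem:ch-max-root}$(ii)$ states: if $P_{\Delta}(H',x) > P_{\Delta}(G',x)$ for $x\geq \rho(G')$, then $\rho(H') < \rho(G')$. Taking $G' = G$ and $H' = H$, we have $P_{\Delta}(H,x) < P_{\Delta}(G,x)$, the wrong inequality; so instead I would take $G' = H$, $H' = G$, which requires the hypothesis $P_{\Delta}(G,x) > P_{\Delta}(H,x)$ for $x \geq \rho(H)$. Thus the clean route is to verify $f(x) > g(x)$ on the larger set $x \geq \rho(H)$ — but since Lemma~\ref{lem:f>g} already gives $f>g$ for \emph{all} $x>0$ and $\rho(H)>0$, this is automatic. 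Hence $\rho(G) < \rho(H)$, which is exactly the claim.

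The only genuine subtlety — and what I expect to be the main point requiring care rather than a deep obstacle — is bookkeeping: checking that the common prefactor is literally identical for $G$ and $H$ (same exponent $n-1-b$ of $(x+1)$, same product over $i\neq p,q$), and correctly orienting the application of Lemma~\ref{lem:ch-max-root}$(ii)$ so that the strict inequality $f>g$ on $x>0$ (which contains $\{x\geq \rho(H)\}$) yields $\rho(G)<\rho(H)$ rather than the reverse. Everything else is immediate from Theorem~\ref{thm:char-poly}, Lemma~\ref{lem:f>g}, and the positivity of the prefactor for $x>0$. I would close by noting that since the transformation is the unique ``balancing-worsening'' move, iterating this comparison will later yield the extremal graphs, but for this lemma the single-step comparison above suffices.
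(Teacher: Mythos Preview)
Your proof is correct and follows essentially the same approach as the paper: factor both characteristic polynomials via Theorem~\ref{thm:char-poly} into a common positive prefactor times $f(x)$ and $g(x)$ respectively, invoke Lemma~\ref{lem:f>g} for $x>0$, and conclude with Lemma~\ref{lem:ch-max-root}$(ii)$. The only cosmetic difference is that the paper names the graphs the other way around (their $H$ is your $G$ and vice versa), so the orientation issue you worried about does not arise there; your handling of it is nonetheless correct.
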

\begin{proof}
Let $H= \mathcal{S}(n_1,n_2,\cdots,n_p,\cdots,n_q, \cdots,n_b)$ and $G= \mathcal{S}(n_1,n_2,\cdots,n_p-1,\cdots,n_q+1, \cdots,n_b)$ are starlike block graphs on $n=1+\sum_{i=1}^b n_i$.  By Theorem~\ref{thm:char-poly}, for $x>0$ we can write the characteristic polynomials of $\Delta(H)$ and $\Delta(G)$ as:
$$P_{\Delta}(H,x) = \left[(x+1)^{\sum_{i=1}^b n_i-b} \prod_{i =1\atop i\neq p,q}^b (x+3n_i+1)\right] f(x), \mbox{ and }$$
and
$$P_{\Delta}(G,x) = \left[(x+1)^{\sum_{i=1}^b n_i-b} \prod_{i =1\atop i\neq p,q}^b (x+3n_i+1)\right] g(x),$$
where $f(x)$ and $g(x)$ as defined in Lemma~\ref{lem:f>g}. Thus, in view of Lemma~\ref{lem:f>g}, we have $P_{\Delta}(H,x)> P_{\Delta}(G,x)$ for all $x>0,$  and hence using Lemma~\ref{lem:ch-max-root}, $\rho(H)<\rho(G).$ 
\end{proof}

Recall, $\mathcal{S}(n_1,n_2,\cdots,n_b)$ is the starlike block graph on $n$ with a central cut vertex vertices with blocks $K_{n_1+1},K_{n_2+1},\cdots,K_{n_b+1}$ and $n=1+\sum_{i=1}^b n_i$.
For fixed value $n$ and $b$, we denote two special starlike block graphs on $n$ vertices with $b$ blocks as follows:
\begin{enumerate}
\item $\mathcal{S}_{n,b}^1=\mathcal{S}(n-b,\scriptsize{\underbrace{1,1,\ldots,1}_{b-1\ times}})$, which consists of a block with $n-b+1$ vertices, while the remaining blocks consist of 2 vertices each.

\item $\mathcal{S}_{n,b}^2=\mathcal{S}(\ceil*{\frac{n-1}{b}}, \cdots, \ceil*{\frac{n-1}{b}},\floor*{\frac{n-1}{b}},\cdots, \floor*{\frac{n-1}{b}})$, which consist of $r$ blocks with $\floor*{\frac{n-1}{b}}+1$, while the remaining blocks consist of $\ceil*{\frac{n-1}{b}}+1$ vertices each, where $n-1=\floor*{\frac{n-1}{b}}b +r$.
\end{enumerate}

We now conclude with the main result of the section. In view of Lemma~\ref{lem:comp_spectral}, the arguments of the proof of this result are similar to the \cite[Theorem~$6.5$]{JD3} and hence omitted.
\begin{theorem}
If $\mathcal{S}(n_1,n_2,\cdots,n_b)$ is a starlike block graph on $n =1+\sum_{i=1}^b  n_i =$ vertices, then 
 $$ \rho(\mathcal{S}_{n,b}^1) \leq  \rho(\mathcal{S}(n_1,n_2,\cdots,n_b)) \leq \rho(\mathcal{S}_{n,b}^2). $$ Moreover,  the maximal graph $\mathcal{S}_{n,b}^2$ and the minimal graph $\mathcal{S}_{n,b}^1$ are unique upto isomorphism.
\end{theorem}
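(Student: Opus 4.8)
The plan is to deduce the theorem from the single-swap comparison in Lemma~\ref{lem:comp_spectral} by iterating it, exactly as in \cite[Theorem~$6.5$]{JD3}. Fix $n$ and $b$, and among all compositions $(n_1,\dots,n_b)$ of $n-1$ into $b$ positive parts consider the partial order generated by the elementary ``transfer'' move that replaces a pair $(n_p,n_q)$ with $n_p-n_q\geq 2$ by $(n_p-1,n_q+1)$. The key combinatorial observation is twofold: first, any composition can be turned into the \emph{most balanced} one, $\mathcal{S}_{n,b}^2$, by a finite sequence of transfer moves (each move strictly decreases $\sum_i n_i^2$, so the process terminates, and it can only terminate at a composition all of whose parts differ by at most $1$, which up to reordering is $\mathcal{S}_{n,b}^2$); second, running the moves ``in reverse'' — i.e.\ repeatedly taking a pair with $n_p\geq n_q$, $n_p\leq n-b$ wait, more carefully: the \emph{least balanced} composition with positive parts is $(n-b,1,\dots,1)$, and any composition can be driven to it by repeatedly increasing a maximal part at the expense of a part equal to some value $\geq 1$ that is not already forced to stay — concretely, as long as there are two parts $n_p,n_q$ with $n_p\geq n_q\geq 1$ and not all-but-one of the parts equal $1$, one applies the move $(n_p,n_q)\mapsto(n_p+1,n_q-1)$, which is legal whenever $n_q\geq 1$ and strictly increases $\sum_i n_i^2$, terminating only at $(n-b,1,\dots,1)=\mathcal{S}_{n,b}^1$.

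With that combinatorial scaffolding in place, the spectral conclusion is immediate from Lemma~\ref{lem:comp_spectral}: each transfer move $(n_p,n_q)\mapsto(n_p-1,n_q+1)$ with $n_p-n_q\geq 2$ strictly \emph{increases} $\rho$, so chaining such moves from an arbitrary $\mathcal{S}(n_1,\dots,n_b)$ up to $\mathcal{S}_{n,b}^2$ yields $\rho(\mathcal{S}(n_1,\dots,n_b))\leq\rho(\mathcal{S}_{n,b}^2)$, and chaining the reverse moves down to $\mathcal{S}_{n,b}^1$ yields $\rho(\mathcal{S}_{n,b}^1)\leq\rho(\mathcal{S}(n_1,\dots,n_b))$. (One should note that the hypothesis of Lemma~\ref{lem:comp_spectral} requires a gap of at least $2$; the only compositions admitting \emph{no} such pair are those all of whose parts differ by at most $1$, i.e.\ isomorphic copies of $\mathcal{S}_{n,b}^2$, so the upward process indeed halts exactly at $\mathcal{S}_{n,b}^2$, and symmetrically the downward process halts exactly at $\mathcal{S}_{n,b}^1$.)

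For uniqueness up to isomorphism, I would argue that the inequalities in Lemma~\ref{lem:comp_spectral} are \emph{strict}. If $\mathcal{S}(n_1,\dots,n_b)$ is not isomorphic to $\mathcal{S}_{n,b}^2$, then some pair of parts differs by at least $2$, so at least one strictly $\rho$-increasing transfer move applies on the way up; hence $\rho(\mathcal{S}(n_1,\dots,n_b))<\rho(\mathcal{S}_{n,b}^2)$. Similarly, if it is not isomorphic to $\mathcal{S}_{n,b}^1$, then it is not the maximally unbalanced composition, so at least one strictly $\rho$-increasing move was used in passing from it down to $\mathcal{S}_{n,b}^1$ — equivalently, at least one strictly $\rho$-increasing move takes $\mathcal{S}(n_1,\dots,n_b)$ toward greater balance only after first reaching it from below — giving $\rho(\mathcal{S}_{n,b}^1)<\rho(\mathcal{S}(n_1,\dots,n_b))$. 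Thus the extremizers are exactly $\mathcal{S}_{n,b}^2$ and $\mathcal{S}_{n,b}^1$, unique up to isomorphism, since the multiset of block sizes determines the starlike block graph up to isomorphism.

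The only genuinely delicate point — and the one I would spell out rather than wave at — is the bookkeeping that every composition is connected to both $\mathcal{S}_{n,b}^1$ and $\mathcal{S}_{n,b}^2$ by \emph{legal} transfer moves (all parts staying $\geq 1$) whose direction is consistent with the $\rho$-monotonicity in Lemma~\ref{lem:comp_spectral}; here the monotone potential $\sum_i n_i^2$ does the work of guaranteeing termination and identifying the two terminal states. Since this is precisely the argument carried out in \cite[Theorem~$6.5$]{JD3} for complete multipartite graphs, with Lemma~\ref{lem:comp_spectral} substituting for its analogue there, the details transfer verbatim and are omitted.
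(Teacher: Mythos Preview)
Your proposal is correct and follows exactly the approach the paper indicates: the paper omits the proof entirely, pointing to \cite[Theorem~$6.5$]{JD3} and Lemma~\ref{lem:comp_spectral}, and you have spelled out precisely that transfer-move argument with the potential $\sum_i n_i^2$ governing termination at $\mathcal{S}_{n,b}^2$ (upward) and $\mathcal{S}_{n,b}^1$ (downward). One small slip: the reverse move $(n_p,n_q)\mapsto(n_p+1,n_q-1)$ is legal only when $n_q\geq 2$, not $n_q\geq 1$, but your own halting condition (``not all-but-one of the parts equal $1$'') already guarantees two parts $\geq 2$ are available, so the argument goes through.
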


\section{Conclusion}
In this article, we investigate the squared distance matrix \(\Delta\) of a simple connected graph with \(n\) vertices that consists of blocks \(K_{n_1 + 1}, K_{n_2 + 1}, \ldots, K_{n_b + 1}\)  with a central cut vertex. This type of graph is referred to as a starlike block graph. We identify a positive semidefinite matrix, denoted as \(\mathcal{L}\), which is Laplacian-like and has a rank of \(n-1\). Additionally, we express the inverse of \(\Delta\) as a rank-one perturbation of \(\mathcal{L}\). In addition, for fixed values of \(n\) and \(b\), we determine the extremal graphs for which the spectral radius of the squared distance matrix achieves its maximum and minimum values uniquely for starlike graphs with \(n\) vertices and \(b\) blocks. We believe that a new technique is necessary for graphs that are not trees and with multiple cut vertices. Block graphs present a suitable starting point for this exploration, and this article aims to contribute to that effort.

\section{Declarations}

\noindent{\textbf{\large Conflicts of interest}}: There is no conflict of interest.\\

\noindent{\textbf{\large  Data Availability}}:  There is no data available for this manuscript.

\small{
}

\end{document}